\documentclass[10pt]{article}
\setlength{\textheight}{21cm}
\usepackage{graphicx,amsmath,amsfonts,amsthm,amssymb,verbatim,stmaryrd}
\newtheorem{theorem}{Theorem}
\newtheorem{lemma}[theorem]{Lemma}
\newtheorem{prop}[theorem]{Proposition}

\newtheorem{cor}[theorem]{Corollary}

\theoremstyle{remark}
\newtheorem*{remark}{Remark}

\newcounter{qcounter}

\newcommand{\R}{\mathbb R}
\newcommand{\C}{\mathbb C}
\newcommand{\Z}{\mathbb Z}
\newcommand{\Q}{\mathbb Q}
\newcommand{\F}{\mathbb F}

\newcommand{\p}{\mathfrak p}
\newcommand{\Pp}{\mathfrak P}

\newcommand{\g}{\mathfrak g}
\newcommand{\Aa}{\mathfrak a}

\newcommand{\SL}{\text{SL} }
\newcommand{\PSL}{\text{PSL} }
\newcommand{\OO}{\mathcal O}

\newcommand{\im}{\text{im }}

\newcommand{\Sym}{\text{Sym}}
\newcommand{\Hom}{\text{Hom}}
\newcommand{ \Galg}{ \F_p \llbracket G \rrbracket }
\newcommand{ \Halg}{ \F_p \llbracket H \rrbracket }

\begin{document}

\title{Bounds for Multiplicities of Automorphic Forms of Cohomological Type on $GL_2$}
\author{Simon Marshall}

\maketitle

\begin{abstract}
This paper contains various results on the cohomology of arithmetic lattices arising from quaternion algebras over a number field with at least one complex place.  Our main result is a power saving for the dimension of the space of cohomological automorphic forms when the associated locally symmetric space is a hyperbolic 3-manifold.  We also prove that cuspidal cohomological forms on these groups must have the same weights as a form base changed from a totally real subfield, and simplify a criterion of Boston and Ellenberg for identifying exhaustive towers of hyperbolic 3-manifolds with trivial rational cohomology.
\end{abstract}

\section{Introduction}
\label{coh1}

Given a semisimple Lie group $G$, a sublattice $\Gamma$, and a coefficient system $\rho$, it is a natural problem to estimate the dimension of the cohomology spaces $H^i( \Gamma, \rho )$, or equivalently the multiplicity $m(\pi, \Gamma)$ with which automorphic forms of cohomological type appear in $L^2_{\text{cusp}} ( \Gamma \backslash G )$.  If $G$ has a compact Cartan subgroup and $\pi$ is in the discrete series it is possible to compute the numbers $m(\pi, \Gamma)$ using Euler-Poincar\'e characteristics \cite{Se} or the trace formula \cite{A}, and if the highest weight of $\rho$ is regular one may in fact give an explicit formula for $\dim H^i( \Gamma, \rho )$ \cite{A}.

If $\pi$ is not discrete series then no explicit formula for $m(\pi, \Gamma)$ is known, and it then becomes natural to ask what the asymptotic behaviour of these multiplicities is, either as $\Gamma$ varies over sublattices of a fixed lattice (the level aspect) or $\rho$ varies (the weight aspect).  To describe existing results in this direction, let us introduce some notation.  Suppose that $G = {\bf G}(\R \otimes_\Q F)$ for some connected semisimple algebraic group $\bf{G}$ over a number field $F$, and fix an embedding ${\bf G} \rightarrow GL_n$ for some $n$.  For any ideal $\Aa$ of $\OO_F$ let $G(\Aa)$ denote the intersection of $G$ with the principal congruence subgroup of level $\Aa$ in $GL_n( \OO_F)$, and suppose that $\Gamma$ and $G( \OO_F )$ are commensurable so that the groups $\Gamma( \Aa ) = \Gamma \cap G( \Aa )$ are of finite index in $\Gamma$.  If we let $V(\Aa) = | \Gamma : \Gamma( \Aa ) |$, the trivial bound for $m(\pi, \Gamma(\Aa) )$ may be stated as $m(\pi, \Gamma(\Aa) ) \ll V(\Aa)$.  The basic result on limit multiplicities is that for any $\pi$ which is not discrete series this may be strengthened to

\begin{equation*}
m( \pi, \Gamma(\Aa) ) = o( V( \Aa ) ),
\end{equation*}

which was derived from the trace formula by de George - Wallach \cite{dGW}, Luck and Savin \cite{Sa}, while for spaces of real rank 1 Peter Sarnak informs me that this can be quantified in the form of

\begin{equation*}
m( \pi, \Gamma(\Aa) ) \ll \frac{ V( \Aa ) }{ \ln V( \Aa ) }.
\end{equation*}

It is interesting to find cases where we may improve over the trivial bound by a power, i.e. give results of the form

\begin{equation*}
m( \pi, \Gamma(\Aa) ) \ll V( \Aa )^{1-\delta}
\end{equation*}

for some $\delta > 0$.  Such bounds follow easily from the trace formula in the case when $\pi$ is nontempered, although the subject of exactly what $\delta$ may be achieved is interesting, and the subject of a conjecture of Sarnak and Xue \cite{SX}.  Our main result is a conditional power saving for families of tempered cohomological forms on $SL(2,\C)$ as the weight varies.

The first example of a power saving for the multiplicity of a tempered automorphic representation was given by Duke \cite{D} for holomorphic cusp forms of weight 1, which by Deligne-Serre \cite{DS} correspond to odd two dimensional complex Galois representations.  Duke considers the dimension of $S_1(q)$, the space of holomorphic cusp forms of weight 1, level $q$ and central character the Legendre symbol $( \cdot / q)$, and improves the trivial bound $\dim S_1(q) \ll q$ to

\begin{equation*}
\dim S_1(q) \ll q^{11/12} \ln^4 q.
\end{equation*}

This was subsequently improved by Michel and Venkatesh to $\ll q^{6/7 + \epsilon}$ \cite{MV}.  A different class of examples was given by Calegari and Emerton in \cite{CE1} for so-called $p$-adic congruence towers, i.e. those obtained by imposing congruence conditions at powers of a fixed prime ideal.  With notation as above, let $\p$ be a prime ideal of $\OO_F$ and $\pi \in \widehat{G}$ be of cohomological type.  Suppose that $G$ does not admit discrete series, or that if it does, that $\pi$ contributes to cohomology outside dimension $1/2 \dim( G / K)$ where $K < G$ is the maximal compact.  Calegari and Emerton then prove the bound

\begin{equation*}
m( \pi, \Gamma( \p^k ) ) \ll V(\p^k)^{1 - 1/\dim(G) } \quad \text{as} \; k \rightarrow \infty.
\end{equation*}

We shall loosely describe our results before stating them fully in the remainder of the section.  To give an example of our power saving theorem, let $\Gamma \subset SL(2,\C)$ be a congruence lattice and let $E_d = \Sym^d \otimes \overline{\Sym}^d$ be one of the irreducible self-dual representations of $SL(2,\C)$.  The trivial bound for $\dim H^1(\Gamma, E_d)$ is $\ll d^2$, and we are able to strengthen this to $\ll d^{2-\delta}$ for $\delta > 0$ under certain assumptions which are expected to be true in every case, and can be verified in any example.  This strengthens a result of Finis, Grunewald and Tirao \cite{FGT} who prove that

\begin{equation}
\label{log}
\dim H^1(\Gamma, E_d) \ll d^2 / \ln(d)
\end{equation}

for arbitrary lattices $\Gamma$ using the trace formula.  Our theorem is a result of studying $p$-adic congruence towers in the way developed by Calegari and Emerton \cite{CE1}, \cite{CE2}, and in doing so we have also been able to simplify a criterion of Boston and Ellenberg \cite{BE} which allows one to find exhaustive towers of coverings of hyperbolic three manifolds by rational homology spheres.  Roughly speaking, we show that if $\Gamma \subset SL(2,\C)$ is congruence and $\p$ a degree 1 prime of its trace field, then if $\dim H^1( \Gamma(\p), \F_p) = 3$ then $\dim H^1( \Gamma(\p^k), \F_p) = 3$ and $H^1( \Gamma(\p^k), \Q) = 0$ for all $k$.  Our third result states that if $\pi$ is a cuspidal cohomological form on a quaternion algebra over a number field $F$, the weights of $\pi$ must be the same as those of a form base changed from a totally real subfield of $F$.

I would like to thank my advisor Peter Sarnak for all of his guidance and encouragement while doing this work.  I would also like to thank Tykal Venkataramana for pointing out an error in an earlier approach to this problem, and Frank Calegari and Matthew Emerton for explaining their work to me and many interesting discussions.

\subsection{Power Savings in the Weight Aspect}
\label{coh11}

Let $r_1$ and $r_2$ be positive integers, $\F = \R^{r_1} \times \C^{r_2}$ and $SL(2,\F)$ the corresponding product of copies of $SL(2,\R)$ and $SL(2,\C)$.  We shall use the usual terms such as congruence and arithmetic to describe lattices $\Gamma \subset SL(2,\F)$, and will use the term `stably arithmetic' to describe arithmetic lattices which realise their invariant trace field and quaternion algebra, and so have an embedding into the multiplicative group of this algebra.  All our lattices are assumed to be irreducible.  If $\Gamma$ is an arithmetic lattice with invariant algebra and trace field $A/F$, we let $F$ have $r_0$ real places at which $A$ is ramified, $r_1$ at which it is split and $r_2$ complex places, and let $\sigma_1, \ldots, \sigma_n$ be the collection of Archimedean embeddings (so that each complex place is counted twice).  If $\Gamma$ is stable, each embedding $\sigma_i$ of $F$ gives a map $\sigma_i: \Gamma \rightarrow G_i$ for $G_i = SU(2), SL(2,\R)$ or $SL(2,\C)$, and their product is a map

\begin{equation*}
\Gamma \longrightarrow G = SU(2)^{r_0} \times SL(2,\R)^{r_1} \times SL(2,\C)^{r_2}
\end{equation*}

whose image is a lattice.  If $\Sym^d \circ \sigma_i$ denotes the composition of $\sigma_i: \Gamma \rightarrow G_i$ with the $d$th symmetric power representation of $G_i$ we may define the representations $\rho_d$ for $d = (d_1, \ldots, d_n)$, which are the tensor products

\begin{equation*}
\rho_d = \bigotimes_{i=1}^n \Sym^{d_i} \circ \sigma_i.
\end{equation*}

We shall primarily be interested in the cohomology groups $H^i(\Gamma, \rho_d)$, which we have defined for stably arithmetic $\Gamma$ but can be extended to arbitrary $\Gamma$ for certain weights.  These groups may be described in terms of automorphic forms on $\Gamma \backslash G$ in a way which we shall briefly recall.  Firstly, if $\Gamma$ is not uniform let $\{ P_j \}$ be the set of $\Gamma$-conjugacy classes of parabolic subgroups, and define $H^i_{\text{cusp}}( \Gamma, \rho_d)$ to be the common kernel of the restriction maps to $H^i( P_j, \rho_d)$.  Otherwise, $H^i_{\text{cusp}}( \Gamma, \rho_d)$ equals $H^i( \Gamma, \rho_d)$.  It is a theorem of Borel and Wallach \cite{BW} that $H^i_{\text{cusp}}(\Gamma, \rho_d) = 0$ unless $d_i = d_j$ for $\sigma_i$ and $\sigma_j$ complex conjugate embeddings, which follows from their calculation of the $(\g, K)$ cohomology of admissible irreducible representations of $SL(2,\C)$.

$H^i_{\text{cusp}}( \Gamma, \rho_d)$ is determined by a certain space of cuspidal automorphic forms on $\Gamma \backslash G$.  It follows from the irreducibility of $\Gamma$ that outside the trivial degrees $i = 0$ and $2r_1 + 3r_2$ (the dimension of the locally symmetric space associated to $\Gamma$), $H^i_{\text{cusp}}( \Gamma, \rho_d)$ may only be nonzero for $r_1 + r_2 \le i \le r_1 + 2r_2$.  For $i = r_1 + r_2$, $H^i_{\text{cusp}}( \Gamma, \rho_d)$ is isomorphic to the space of cusp forms on $\Gamma \backslash G$ which are the $d_i + 1$ dimensional representation of $SU(2)$ at ramified real embeddings $\sigma_i$, discrete series of weight $d_i+2$ for $\sigma_i$ split real, and cohomological type on $SL(2,\C)$ of weight $2d_i+2$ for $\sigma_i$ complex.  For other $i$, it is some number of copies of this space.  Our first result is a strengthening of Borel and Wallach's theorem for stably arithmetic $\Gamma$:

\begin{prop}
\label{vanishing}
Let $\Gamma \subset SL(2,\F)$ be a stably arithmetic lattice with trace field $F$.  If $H^i_{\text{cusp}}(\Gamma, \rho_d) \neq 0$ for some $i$, then we must have $d_i = d_j$ for $\sigma_i$ and $\sigma_j$ lying over the same embedding of the maximal totally real subfield of $F$.
\end{prop}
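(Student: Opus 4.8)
The plan is to reformulate the statement in terms of a cuspidal automorphic representation of $GL_2$ over $F$ and then extract the constraint on the weights from a rigidity property of its Archimedean components. We may assume $F$ has a complex place, since otherwise $F=F_0$ and the assertion is empty. Suppose $H^i_{\text{cusp}}(\Gamma,\rho_d)\neq 0$. As $\Gamma$ is stably arithmetic it embeds into $A^\times$ for its invariant quaternion algebra $A$ over $F$, and by the description of $H^i_{\text{cusp}}$ recalled above there is a cuspidal automorphic representation $\pi$ of $A^\times(\A_F)$ whose Archimedean component at a real place $v$ is the $(d_v+1)$-dimensional representation of $SU(2)$ when $A$ ramifies at $v$ and the weight-$(d_v+2)$ discrete series otherwise, and at a complex place $v$ is the tempered cohomological representation of $SL(2,\C)$ of weight $2d_v+2$. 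Here Borel--Wallach already forces $d_i=d_j$ for conjugate embeddings, so $d_v$ is well defined at a complex $v$, and what remains is the matching of the $d_v$ over the --- possibly several real, possibly mixed real and complex --- places of $F$ above a fixed real place $\tau$ of $F_0$.

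First I would pass to $GL_2$ by the Jacquet--Langlands correspondence, obtaining a cuspidal automorphic representation $\Pi$ of $GL_2(\A_F)$. At every real place $v$, whether or not $A$ ramifies there, $\Pi_v=D_{d_v+2}$, since Jacquet--Langlands carries both the $(d_v+1)$-dimensional representation of $\HH^\times$ and the weight-$(d_v+2)$ discrete series of $GL_2(\R)$ to it, while $\Pi_v=\pi_v$ at a complex $v$. Thus $\Pi$ is cuspidal and cohomological, and its Archimedean $L$-parameter $\phi_v\colon W_{F_v}\to GL_2(\C)$ is in each case, up to twist by an unramified character, the ``weight $d_v+1$'' parameter: at a real place $v$ it is $\Ind_{W_\C}^{W_\R}\bigl(z\mapsto(z/\bar z)^{(d_v+1)/2}\bigr)$, and at a complex place it is the restriction of the same parameter to $W_\C$, namely $z\mapsto\mathrm{diag}\bigl((z/\bar z)^{(d_v+1)/2},\,(z/\bar z)^{-(d_v+1)/2}\bigr)$.

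The heart of the argument is to show that the family $(\phi_v)_{v\mid\infty}$ is, up to unramified twist, pulled back along $\prod_{v\mid\infty}W_{F_v}\hookrightarrow\prod_{\tau\mid\infty}W_{F_{0,\tau}}$ from a single parameter attached to the Archimedean places of $F_0$ --- equivalently, that $d_v$ depends only on the place $\tau$ of $F_0$ lying below $v$. Granting this, one unwinds at once: over such a $\tau$ there is a common $k_\tau$ with $\Pi_v\cong D_{k_\tau}$ at every real $v\mid\tau$ and $\phi_v$ equal to the restriction to $W_{F_v}$ of the parameter of $D_{k_\tau}$ at every complex $v\mid\tau$, whence $d_v=k_\tau-2$ for all $v\mid\tau$; and a family of Archimedean parameters of exactly this shape is that of a Hilbert-type cusp form on $GL_2$ over $F_0$ base changed to $F$, so $\pi$ has the weights of such a base change, and taking $\sigma_i,\sigma_j$ conjugate recovers the Borel--Wallach constraint. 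To prove the pullback property I would use that $\Pi$, being cohomological, is algebraic after a uniform twist by a power of $|\det|$, so that Clozel's purity lemma supplies a single weight $w$, the same at every Archimedean place, about which the infinitesimal character of each $\Pi_v$ is symmetric; combining this with the explicit shape of $\phi_v$ and the rigidity contributed by the tempered cohomological component at the complex places above $\tau$, one should be forced to $d_v+1=w$ for every $v\mid\tau$.

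The hard part will be this last deduction, from purity to the pullback property. For $GL_2$ the parameters $\phi_v$ are essentially self-dual, so purity holds at each individual place for formal reasons and by itself is not enough; the content lies entirely in the fact that Clozel's lemma gives the same weight $w$ at all Archimedean places, and this must be combined with genuinely global input --- the coherence of $\omega_\Pi$ as a Hecke character of $F$, and the constraint that $\Pi$ is tempered cohomological at the complex place --- to exclude the a priori possibility that the weight varies among the places over $\tau$ by a twist that is locally but not globally realizable. Over a totally real field no such constraint holds, as non-parallel-weight Hilbert modular forms show, so one must exploit the complex place of $F$ and the temperedness it forces; making this precise is the main obstacle, and I would attack it by viewing $\Pi$ as an automorphic representation of $\mathrm{Res}_{F/F_0}GL_2$ over $F_0$ and transferring, via base change or automorphic induction to a suitable extension, to a situation in which the required equality of weights becomes an instance of Borel--Wallach or of purity for a larger group over a totally real field.
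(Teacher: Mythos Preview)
Your approach is quite different from the paper's and, as you yourself recognize, has a genuine gap at the crucial step. You correctly observe that Clozel's purity lemma gives no constraint here: for $GL_2$ the Archimedean $L$-parameters of discrete series and of tempered cohomological representations of $GL_2(\C)$ are already essentially self-dual, so the purity weight $w$ exists trivially at each place and does not tie the $d_v$ together across the places lying over a common $\tau$. Your hope that the global coherence of $\omega_\Pi$ together with temperedness at the complex place fills the gap does not pan out: the central character contributes only the parity condition on the $d_v$ at real places and nothing at complex places, while temperedness at a complex place is a purely local condition already subsumed in Borel--Wallach. Non-parallel-weight Hilbert modular forms over a totally real $F$ satisfy every hypothesis you list except the existence of a complex place, and none of the transfers you propose --- restriction of scalars to $F_0$, base change, automorphic induction --- actually uses that complex place to produce the required equality; for instance, automorphic induction to $GL_{2[F:F_0]}$ over $F_0$ has infinitesimal character at $\tau$ equal to the union of the individual ones, which is again symmetric, so purity for the larger group is still vacuous.

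The paper's argument is of a different and much more elementary nature: it never passes to automorphic representations at all. Since $\Gamma$ is stably arithmetic, $\rho_d$ is defined over any splitting field $L$ of $A$, and an element $g$ of $\text{Gal}(\widetilde{L}/\Q)$ acting on a resolution over $\widetilde{L}$ induces an isomorphism $H^i_{\text{cusp}}(\Gamma,\rho_d)\simeq H^i_{\text{cusp}}(\Gamma,\rho_{g\cdot d})$, where $g\cdot d$ is obtained by permuting the $d_i$ according to the Galois action on the embeddings $\sigma_i$. Applying the Borel--Wallach constraint to every $\rho_{g\cdot d}$ forces $d$, viewed as a function on $G/H$ with $G=\text{Gal}(K/\Q)$ and $H$ the stabilizer of $F$, to be invariant not only under complex conjugation $\tau$ but under its entire normal closure $N$; since the fixed field of $N$ is the maximal totally real subfield of $K$, the fixed field of $HN$ is $F_0$ and the claim follows. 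If you wanted to rescue the automorphic route, the missing ingredient is precisely that the Galois conjugates $\Pi^g$ of $\Pi$ are again cuspidal automorphic with permuted Archimedean components --- but that is the automorphic shadow of the paper's Galois action on cochains, not a consequence of purity or of any instance of functoriality you have invoked.
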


In the language of automorphic forms, this may be rephrased as saying that cuspidal forms of cohomological type may only occur with weights that look as if they were base changed from a totally real subfield.  The proof works by using the Galois group to transfer Borel and Wallach's condition between different pairs of embeddings.  Note that we may define the representations $E_d$ for any lattice $\Gamma \subset SL(2,\C)$, and so for arithmetic $\Gamma$ proposition \ref{vanishing} may also be applied by passing to a strictly arithmetic subgroup of finite index.  Consequently, $H^1_{\text{cusp}} (\Gamma, E_d) = 0$ for $d \neq 0$ unless the invariant trace field of $\Gamma$ has a totally real subfield of index 2.  

Our main result applies to congruence lattices in $SL(2,\C)$, and contains as a special case a conditional strengthening of (\ref{log}) to a power saving.

\begin{theorem}
\label{weight}
Let $\Gamma$ be a congruence lattice in $SL_2(\C)$ with trace field $F$ and quaternion algebra $A$.  Let $p$ be a prime of $\Q$ which is totally split in $F$, with $\p_1, \ldots, \p_n$ the primes above it, and such that $A$ is split at all places above $p$.  Suppose that

\begin{equation}
\label{condition}
\dim_{\F_p} H^1( \Gamma(p \p_i^k), \F_p ) = o( p^{3k} )
\end{equation}

for all $\p_i$.  Then there exists a $\delta > 0$ such that

\begin{eqnarray*}
\dim H^1(\Gamma, \rho_d) \ll \left( \prod_{i=1}^n d_i \right)^{1-\delta}.
\end{eqnarray*}

\end{theorem}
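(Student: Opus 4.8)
The plan is to deduce Theorem~\ref{weight} from the study of the $p$-adic congruence tower $\Gamma(p\p_i^k)$ in the style of Calegari--Emerton, with condition (\ref{condition}) playing the role of their ``outside the middle degree'' hypothesis. First I would fix the prime $p$ and, using that $p$ splits completely in $F$ and $A$ is split above $p$, identify the completion of $\Gamma$ at each $\p_i$ with a compact open subgroup of $SL_2(\Z_p)$; taking the product over the $n$ primes above $p$ realises a neighbourhood of the identity in $\prod_i SL_2(\Z_p)$ as a $p$-adic analytic group $G$ acting on the relevant completed cohomology. The key point is that for each coordinate, the representations $\Sym^{d_i}$ over $\overline{\Q}_p$ can be organised $p$-adically: if we write $d_i$ in base $p$, then $\Sym^{d_i}$ modulo $p$ is built (via the structure of $SL_2(\F_p)$-representations, Steinberg-type tensor product decompositions) out of the $p$ fundamental pieces $\Sym^0, \ldots, \Sym^{p-1}$, and the cohomology $H^1(\Gamma, \rho_d)$ with coefficients in such a highly ramified weight can be compared to the cohomology of $\Gamma(p\p_i^k)$ with trivial $\F_p$-coefficients for a suitable $k$ depending on the $p$-adic valuations appearing in $d_i$.

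Concretely, the main engine should be an inequality of the shape
\begin{equation*}
\dim_{\F_p} H^1(\Gamma, \rho_d \otimes \F_p) \ll \sum_i \dim_{\F_p} H^1(\Gamma(p\p_i^{k_i}), \F_p),
\end{equation*}
where $k_i \asymp \log_p d_i$, obtained by a spectral sequence or a Shapiro-type argument: $H^\bullet(\Gamma, \rho_d)$ embeds into $H^\bullet$ of a deep congruence subgroup with trivial coefficients because the weight module $\rho_d/p$ is a subquotient of $\F_p[\Gamma/\Gamma(p\p_i^{k_i})]$ in each factor. Summing over $i$ and applying hypothesis (\ref{condition}) gives $H^1(\Gamma,\rho_d\otimes\F_p) = o(\sum_i p^{3k_i}) = o(\sum_i d_i^{3})$. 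This alone is far from a power saving in $\prod_i d_i$, so the real content is to run this argument in all coordinates simultaneously and with sharp control: one wants the $n$-dimensional $p$-adic analytic group $G = \prod_i G_i$ acting on a finitely generated module over $\F_p\llbracket G\rrbracket = \Galg$, so that the Iwasawa-algebra dimension theory (codimension of the annihilator of completed cohomology) yields a bound $\dim H^1(\Gamma,\rho_d\otimes\F_p)\ll (\prod_i d_i)^{1-\delta}$ with $\delta$ coming from the codimension of the support of completed $H^1$ inside $\mathrm{Spec}\,\Galg$. The hypothesis (\ref{condition}) is exactly what forces this support to have positive codimension in each factor.

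The technical heart, then, is a dimension-counting lemma in the Iwasawa algebra $\Galg$ where $G$ is (an open subgroup of) $SL_2(\Z_p)^n$: if $M$ is a finitely generated $\Galg$-module whose ``slices'' $M/\Aa_i M$ (where $\Aa_i$ is the augmentation ideal of the $i$-th factor) grow like $o(p^{3k})$ along the $\p_i$-tower, then the Hilbert--Samuel-type growth of $\dim_{\F_p} M_{n_i}$ (the $(\p_i^{n_i})$-coinvariants) is bounded by $p^{n_i(3-\delta_i)}$; multiplying over $i$ and translating $n_i\leftrightarrow\log_p d_i$ gives the theorem with $\delta = \min_i \delta_i/3$. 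I expect the main obstacle to be precisely this passage from the \emph{qualitative} little-$o$ hypothesis (\ref{condition}) to a \emph{quantitative, uniform} power saving: little-$o$ growth of codimension-one slices does not automatically upgrade to polynomial savings without an argument that the relevant Iwasawa module has, in fact, dimension strictly less than $3n$ as a module over $\Galg$ (equivalently, that completed $H^1$ is not ``as large as possible'' in any direction), and then one must check that the $\delta$ obtained is genuinely independent of $d$. A secondary technical point is handling the characteristic-zero cohomology $H^1(\Gamma,\rho_d)$ (with $\overline{\Q}_p$ or $\C$ coefficients) rather than $\F_p$-cohomology — this requires a universal coefficients / lifting argument bounding $\dim_{\overline{\Q}_p} H^1(\Gamma,\rho_d\otimes\overline{\Q}_p)$ by $\dim_{\F_p} H^1(\Gamma, \overline{M}_d)$ for a mod-$p$ reduction $\overline{M}_d$ of an $\OO$-lattice in $\rho_d$, which is routine but must be done carefully so as not to lose the saving, and the choice of the auxiliary tame level $p$ in $\Gamma(p\p_i^k)$ (as opposed to $\Gamma(\p_i^k)$) is presumably there to make these integral structures and the $p$-adic analytic group structure behave well.
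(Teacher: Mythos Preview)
Your broad outline is right --- realise $\rho_d$ over $\Q_p$, reduce mod $p$, embed into a coset module, apply Shapiro, and then invoke the hypothesis via Calegari--Emerton --- but you have misidentified where the real work lies, and your proposed Iwasawa-theoretic shortcut does not obviously bridge the gap.  The point you flag as the ``main obstacle'' (upgrading the little-$o$ of (\ref{condition}) to a power saving) is actually \emph{not} an obstacle: Theorem~\ref{classify} says the only possible growth rates are $\asymp p^{3k}$, $\asymp p^{2k}$, or bounded, so $o(p^{3k})$ already forces $\ll p^{2k}$ in each $\p_i$-tower.  The genuine difficulty is elsewhere.

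First, the Shapiro step: you assert that $\rho_d/p$ is a subquotient of $\F_p[\Gamma/\Gamma(p\p_i^{k_i})]$, but embedding into the full regular representation of $\Gamma/\Gamma(\prod\p_i^{k_i})$ (dimension $\sim p^{3K}$, with $K = \sum k_i$) is useless since $\dim\rho_d \sim p^K$.  The paper instead chooses a specific lattice $L$ in $\rho_v$ (polynomials integrally valued on $G(p)$) so that $L/pL$ embeds into $\F_p[\Gamma(p)/(\Gamma(p)\cap\Gamma_0(\Pp))]$, a module of the \emph{same} dimension $\sim p^K$.  This lands you in a $\Gamma_0$-type tower, not a principal congruence tower.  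Second, and this is the heart of the paper's argument (Section~\ref{coh34}), the hypothesis gives you $\dim H^1(\Gamma(\p_i^k),\F_p)\ll p^{2k}$, but you now need $\dim H^1(\Gamma_0(\p_i^k),\F_p)\ll p^{(1-\delta)k}$, i.e.\ a saving over the trivial bound $p^k$ in a \emph{one}-parameter tower.  The paper extracts this by an inclusion--exclusion on invariants in $\widetilde{H}^1$ together with conjugation by commensurator elements $g_r$ to ``round out'' skewed congruence subgroups; nothing like this appears in your sketch.  Your Iwasawa-module formulation (bounding $M_{n_i}$ by Hilbert--Samuel growth and multiplying over $i$) would at best control $H^1$ of principal congruence subgroups, not the $\Gamma_0$-subgroups that the lattice argument actually produces.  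Finally, a smaller but real issue: an inclusion $L/pL\hookrightarrow V$ does \emph{not} give $\dim H^1(\Gamma,L/pL)\le\dim H^1(\Gamma,V)$ without controlling $H^0$ of the cokernel; the paper handles this by passing to the space $\Omega^1$ of closed and coclosed chains (Lemma~\ref{approx}), which does behave well under subrepresentations.
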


Note that the trivial bound for the dimension of this cohomology group is $\dim \rho_d \sim \prod d_i$.  It has been conjectured by Calegari and Emerton \cite{CE2} that the hypothesis (\ref{condition}) is satisfied for \textit{every} prime, and in section \ref{coh12} we show that it may be checked in any example in which it is true.  We do this for the Bianchi group $SL(2,\OO_{-2})$ and the prime 3 in section \ref{coh41}, producing a bound of $\delta \ge 1/8$.  We believe that this result should also hold for congruence lattices in a general $SL(2,\F)$, but at present the manipulations of cohomology groups in the proof break down when the cohomological degree is greater than 1.  As a result we can only obtain information about $H^1$, which is trivial in higher rank cases.  However, for congruence lattices in $SL(2,\F)$ with $r_1$ even and $r_2 = 1$ we may use the Jacquet-Langlands correspondence to reduce to the case of $SL(2,\C)$, and so obtain a power saving for multiplicities in these groups.

Proposition \ref{vanishing} and theorem \ref{weight} combine with work of Rajan \cite{Ra} and Finis, Grunewald and Tirao \cite{FGT} to suggest a picture of the family of cohomological forms on quaternion algebras over a number field.  If $\Gamma$ is congruence and $\Delta$ is a suitable subgroup, Rajan uses solvable base change to establish the existence of nontrivial elements of $H^i_{\text{cusp}}(\Delta, \rho_d)$ for exactly the weights $d$ we allow.  More precisely, he shows that if $\Gamma \subset SL(2,\F)$ is a congruence lattice with trace field $L$ which lies in a solvable extension of its maximal totally real subfield, and $d$ is an admissible weight, then there is $\Delta \subset \Gamma$ of finite index such that $H^i_{\text{cusp}} ( \Delta, \rho_d) \neq 0$.  Finis, Grunewald and Tirao carry out base change explicitly for the Bianchi groups, proving the lower bound $\dim H^1(\Gamma, E_d) \gg d$.  They also observe the striking behaviour that for $d$ sufficiently large the only contribution to $H^1_{\text{cusp}}$ seemed to be coming from base change and CM forms (a subspace we will denote $H^1_{\text{bc}}$ for simplicity), suggesting that in fact $\dim H^1_{\text{cusp}} \ll d$ as well.

Their work and ours leads us to think that at a given level, all but finitely many cohomological forms arise from base change from a totally real subfield and CM constructions.  This behaviour differs strongly from that of the spaces $H^1( \Gamma_0(\p), \Q)$ as $\p$ varies, which are expected to contain nontrivial elements for infinitely many $\p$ based on the existence of elliptic curves over the trace field $F$; moreover, this space can be seen to break up under the Hecke action with two dimensional subspaces defined over $\Q$ for each elliptic curve.  The observed behaviour that $H^1_{\text{bc}}$ is equal to $H^1_{\text{cusp}}$ for $d$ large may be seen as analogous to the situation over $\Q$, where Maeda has conjectured that the spaces $H^1( SL(2,\Z), \Sym^d / \Q )$ are irreducible under the Hecke action for $d$ sufficiently large.

\subsection{$p$-Congruence Towers}
\label{coh12}

The proof of theorem \ref{weight} is based on results of Calegari and Emerton on the growth of cohomology with $\F_p$ coefficients in a tower of covers obtained by imposing congruence conditions at a fixed prime $p$.  To describe their results, let $\Gamma \subset SL(2,\C)$ be a general lattice with quaternion algebra $A/F$.  Given a choice of rational prime $p$ and primes $\p_1, \ldots \p_t$ of $F$ above it, we let the closure of the image of the natural embedding $\Gamma \rightarrow \prod A^\times_{\p_i}$ be denoted $K_p$.  If only one prime $\p$ is taken we let the corresponding group be denoted $K_\p$.  If $A$ is split at $\p$ and $K_\p$ is compact we may fix an inclusion $K_\p \subset \SL(2,\OO_{\p} )$, and if $G(\p^k)$, $G_0(\p^k)$ etc. are the standard congruence subgroups of $\SL(2,\OO_\p)$ then the intersection of $\Gamma$ with any of them will be denoted $\Gamma(\p^k)$, $\Gamma_0(\p^k)$, etc.

Consider any choice of $p$, $\p_1, \ldots \p_t$ as above, and let the dimension of the $p$-adic analytic group $K_p$ be $d$. Assuming it is compact, $K_p$ it will contain a uniform pro-$p$ group $G$ of finite index.  If $G = G_1 \rhd G_2 \rhd \ldots$ is the lower $p$-series of $G$, the sequence of sublattices $\Gamma_k = \Gamma \cap G_k$ of $\Gamma$ will be referred to as a $p$-congruence tower.  In \cite{CE2} Calegari and Emerton have proven the following about such towers, which has guided most of our work.

\begin{theorem}
\label{classify}
The $\F_p$ cohomology of a $p$-congruence tower tower exhibits one of the three following types of behaviour:

\begin{list}{ \alph{qcounter}) }{ \usecounter{qcounter} }

\item $H^1(\Gamma_k, \F_p) = \Hom( G_k, \F_p)$ for all $k$, $d=3$ and the pro-$p$ completion of $\Gamma_1$ is analytic,

\item $\dim H^1(\Gamma_k, \F_p) \sim p^{(d-1)k}$,

\item or $\dim H^1(\Gamma_k, \F_p) \sim p^{dk}$.

\end{list}

\end{theorem}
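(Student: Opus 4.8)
The plan is to re-express all the groups $H^1(\Gamma_k,\F_p)$ through a single module over an Iwasawa algebra, and to recognise the three regimes from one numerical invariant of that module. Write $\Lambda=\F_p\llbracket G\rrbracket$; since $G$ is uniform pro-$p$ of dimension $d$, $\Lambda$ is a Noetherian, Auslander regular local ring of global dimension $d$, with $\mathrm{gr}\,\Lambda\cong\F_p[x_1,\dots,x_d]$ and $\mathrm{Ext}^i_\Lambda(\F_p,\Lambda)=0$ for $i\neq d$ and $\cong\F_p$ for $i=d$. The group $\Gamma_1=\Gamma\cap G$ is torsion-free (it embeds in $G$) and dense in $G$; it is the fundamental group of a finite aspherical complex, of cohomological dimension $3$ if it is uniform and $2$ otherwise, so $\F_p$ admits a finite free resolution $P_\bullet$ over $\F_p[\Gamma_1]$ of that length. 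Tensoring gives a perfect complex $Q_\bullet:=P_\bullet\otimes_{\F_p[\Gamma_1]}\Lambda$ of free $\Lambda$-modules whose homology is Calegari--Emerton's completed homology $\widetilde H_\bullet:=\varprojlim_k H_\bullet(\Gamma_k,\F_p)$. Thus $\widetilde H_\bullet$ is a finitely generated $\Lambda$-module, $\widetilde H_0=\F_p$, and $\widetilde H_i=0$ for $i\geq 2$ by the Calegari--Emerton vanishing of completed cohomology above the middle degree $q_0=1$ for $SL_2(\C)$.

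I would then prove a control statement. Base-changing $Q_\bullet$ along $\Lambda\to\F_p[G/G_k]$ recovers $R\Gamma(\Gamma_k,\F_p)$, giving a spectral sequence $E^2_{i,j}=\mathrm{Tor}^\Lambda_i(\widetilde H_j,\F_p[G/G_k])\Rightarrow H_{i+j}(\Gamma_k,\F_p)$. Since $G_k$ is again uniform of dimension $d$ one has $\mathrm{Tor}^\Lambda_i(\F_p,\F_p[G/G_k])\cong H_i(G_k,\F_p)\cong\bigwedge^i\F_p^{\,d}$, of dimension $\binom{d}{i}$ independent of $k$, and one reads off
\[
\dim_{\F_p}H^1(\Gamma_k,\F_p)=\dim_{\F_p}\bigl(\widetilde H_1\bigr)_{G_k}+e_k,\qquad (e_k)_k\ \text{bounded},
\]
where $(\widetilde H_1)_{G_k}=\widetilde H_1/I(G_k)\widetilde H_1$; and when $\widetilde H_1=0$ the sequence degenerates to $H^1(\Gamma_k,\F_p)=\Hom(G_k,\F_p)$. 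A standard Iwasawa-theoretic computation — pass to the associated graded module over $\mathrm{gr}\,\Lambda$ — shows that $k\mapsto\dim_{\F_p}(\widetilde H_1)_{G_k}$ is, for $k\gg0$, a Hilbert--Samuel function: it is $\sim c\,p^{\delta k}$ for some $c>0$, where $\delta:=\dim_\Lambda\widetilde H_1\in\{0,1,\dots,d\}$ is the canonical dimension, and $\delta=0$ precisely when $\widetilde H_1$ is finite. Everything is thereby reduced to computing the possible values of $\delta$, and to showing that $\delta=0$ forces $\widetilde H_1=0$ and $d=3$.

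For this I would use Poincaré duality together with the Auslander structure of $\Lambda$. When $\Gamma_1$ is uniform it is a $\mathrm{PD}_3$-group over $\F_p$, so $\mathrm{Hom}_{\F_p[\Gamma_1]}(P_\bullet,\F_p[\Gamma_1])$ is again a free resolution of $\F_p$, shifted by $3$; tensoring with $\Lambda$ this becomes a self-duality $R\mathrm{Hom}_\Lambda(Q_\bullet,\Lambda)\simeq Q_\bullet[3]$. Feeding the two-step filtration of $Q_\bullet$ (graded pieces $\F_p$ and $\widetilde H_1$, using $\widetilde H_i=0$ for $i\geq 2$) into the hyper-$\mathrm{Ext}$ spectral sequence forces $\mathrm{Hom}_\Lambda(\widetilde H_1,\Lambda)=0$, so $\widetilde H_1$ is $\Lambda$-torsion and $\delta\leq d-1$, and then $\mathrm{Ext}^1_\Lambda(\widetilde H_1,\Lambda)\cong\widetilde H_1$, so that $\widetilde H_1\neq0$ implies its grade is $1$, i.e. $\delta=d-1$. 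When $\Gamma_1$ is non-uniform one argues similarly with the relative Poincaré--Lefschetz duality of the Borel--Serre pair $(\overline X,\partial\overline X)$, whose boundary term — the cohomology of the peripheral $2$-tori — is what now also permits $\delta=d$. In every case $\widetilde H_1=0$ makes $Q_\bullet$ a free resolution of $\F_p$ over $\Lambda$ of length $\leq 3$; since $\mathrm{pd}_\Lambda\F_p=d$ this forces $d=3$, and one then deduces, using the structure theory of $p$-adic analytic pro-$p$ groups, that the pro-$p$ completion of $\Gamma_1$ is analytic and $H^1(\Gamma_k,\F_p)=\Hom(G_k,\F_p)$ for all $k$. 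Assembling: $\widetilde H_1=0$ gives regime (a); $\delta=d-1$ gives $\dim H^1(\Gamma_k,\F_p)\sim p^{(d-1)k}$, regime (b); and $\delta=d$ gives regime (c).

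The main obstacle is the determination of $\delta$: the control spectral sequence and the Hilbert--Samuel asymptotics are routine, but the rigidity — that Poincaré duality together with the vanishing $\widetilde H_{\geq2}=0$ leaves room only for $\delta\in\{d,d-1\}$, or else for $\widetilde H_1$ to collapse entirely — relies on the special geometry of $SL_2(\C)$ (namely $\dim\HH^3=3$ with $q_0=l_0=1$, so that the duality spectral sequence has no slot for an intermediate-dimensional module), and keeping track of the boundary contribution in the non-uniform case is the most delicate bookkeeping. A secondary subtlety is the passage from boundedness of $\dim_{\F_p}H^1(\Gamma_k,\F_p)$ in the tower to analyticity of the pro-$p$ completion, which rests on the Lazard--Lubotzky--Mann characterisation of $p$-adic analytic groups by finiteness of rank rather than on any direct argument.
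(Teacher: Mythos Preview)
The paper does not prove this theorem: it is quoted verbatim as a result of Calegari and Emerton \cite{CE2} (see the sentence immediately preceding the statement in section~\ref{coh12}) and is used as a black box throughout. There is therefore no in-paper argument to compare your attempt against.

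For what it is worth, your sketch is a faithful reconstruction of the Calegari--Emerton proof itself: package the tower into the completed homology $\widetilde H_\bullet$ as a finitely generated module over the Iwasawa algebra $\Lambda=\F_p\llbracket G\rrbracket$, reduce the asymptotic question to the canonical dimension $\delta=\dim_\Lambda\widetilde H_1$ via a control theorem and Hilbert--Samuel growth, and then pin $\delta$ down to $\{0,d-1,d\}$ by playing Poincar\'e(--Lefschetz) duality for the $3$-manifold off against the Auslander regularity of $\Lambda$. The outline is correct. One caution: when you invoke ``the Calegari--Emerton vanishing of completed cohomology above $q_0=1$'' to get $\widetilde H_{\ge 2}=0$, note that in their paper this vanishing is established in the same breath as the trichotomy rather than being a logically prior input---so in a self-contained write-up you would want to derive it directly from the duality of the perfect complex $Q_\bullet$ rather than cite it. Otherwise the bookkeeping you flag (boundary terms in the non-uniform case, and the Lazard--Lubotzky--Mann step from bounded rank to analyticity) is exactly where the work lies.
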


It has been conjectured by them that possibility (c) never happens, which is why we expect to always be able to find a prime for which the hypothesis of theorem \ref{weight} is satisfied.  The essential idea of the proof of theorem \ref{weight} is to use the arithmetic nature of $\Gamma$ to define $\rho_d$ over a $p$-adic field and reduce mod $p$, which converts the problem to one of bounding the $\F_p$ cohomology growth in a tower similar to the ones they consider.  In studying these towers we have proven two results which allow us to determine when the behaviour is of type (a) and (b).  While we only need to prove upper bounds of the form (b) to verify the hypothesis of theorem \ref{weight}, our test for towers of type (a) is interesting in its own right as it simplifies a criterion of Boston and Ellenberg for finding exhaustive towers of rational homology 3-spheres.  Our two classification results are the following.

\begin{prop}
\label{analytic}
Suppose $\p$ is a prime of $F$ of degree 1, $A$ is split at $\p$ and the closure of $\Gamma$ in $A_\p^\times$ is isomorphic to $SL(2,\Z_p)$.  If $\dim H^1(\Gamma(\p), \F_p) \le p-9$ ($p-5$ if $\Gamma$ is congruence), then the congruence tower at $\p$ is of type (a).
\end{prop}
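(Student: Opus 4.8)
The plan is to pass to pro-$p$ completions and exploit the $SL(2,\F_p)$-action. Since $\p$ has degree $1$ and $A$ is split at $\p$ we identify $\OO_\p$ with $\Z_p$, and then the closure of $\Gamma(\p):=\Gamma\cap G(\p)$ in $SL(2,\Z_p)$ is the principal congruence subgroup $\Gamma(p)=\ker(SL(2,\Z_p)\to SL(2,\F_p))$, which for the large primes $p$ that are the only case of interest is the uniform pro-$p$ group of dimension $3$ whose lower $p$-series defines the congruence tower. By Theorem~\ref{classify} the tower is of one of the three types, so I want to exclude (b) and (c). Suppose the tower is not of type (a). Put $P:=\widehat{\Gamma(\p)}^{(p)}$, so that $H^1(\Gamma(\p),\F_p)=H^1(P,\F_p)$, and let $N$ be the kernel of the surjection $P\twoheadrightarrow\Gamma(p)$, which is surjective because $\Gamma$ is dense in $SL(2,\Z_p)$. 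If $N$ were trivial then $P=\Gamma(p)$ would be $p$-adic analytic and $\dim H^1(\Gamma(\p^k),\F_p)=\dim\Hom(\Gamma(p^k),\F_p)=3$ for every $k$, i.e. the tower would be of type (a); so $N\neq 1$. As $\Gamma(\p)$ is the fundamental group of a finite-volume hyperbolic $3$-manifold it is finitely presented, so the relation module $M:=N^{\mathrm{ab}}\otimes\F_p$ is a nonzero finitely generated $\F_p\llbracket\Gamma(p)\rrbracket$-module.

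The key point is that everything is $SL(2,\F_p)$-equivariant: since $\Gamma$ is dense in $SL(2,\Z_p)$ we have $\Gamma/\Gamma(\p)\cong SL(2,\F_p)$, so $H^1(\Gamma(\p),\F_p)$ is an $\F_p[SL(2,\F_p)]$-module and $M$ is a module over $\F_p\llbracket SL(2,\Z_p)\rrbracket$. By Nakayama the co-Frattini quotient $M_{\Gamma(p)}:=M\otimes_{\F_p\llbracket\Gamma(p)\rrbracket}\F_p$ is a nonzero finitely generated $\F_p[SL(2,\F_p)]$-module, and the five-term inflation--restriction sequence for $1\to N\to P\to\Gamma(p)\to 1$, together with the injection $H^1(\Gamma(p),\F_p)\hookrightarrow H^1(P,\F_p)$, identifies $H^1(\Gamma(\p),\F_p)/H^1(\Gamma(p),\F_p)$ with the kernel of the transgression $(M_{\Gamma(p)})^{\ast}=H^1(N,\F_p)^{\Gamma(p)}\to H^2(\Gamma(p),\F_p)$. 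As $\dim H^2(\Gamma(p),\F_p)=3$, this yields
\[
\dim H^1(\Gamma(\p),\F_p)\;\ge\;\dim M_{\Gamma(p)},
\]
so it suffices to show $\dim M_{\Gamma(p)}\ge p-8$ (and $\ge p-4$ when $\Gamma$ is congruence), which contradicts the hypothesis.

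Proving this lower bound is the heart of the matter and, I expect, the main obstacle: Nakayama alone only gives $\dim M_{\Gamma(p)}\ge 1$, and using that $SL(2,\F_p)$ has no nonzero invariants forces only $\dim M_{\Gamma(p)}\ge 2$ (the standard representation), whereas one needs the order of magnitude $p$. I would obtain it by combining two ingredients. The first is cohomological: $\Gamma(\p)$ has cohomological dimension at most $3$ and $\Gamma(p)$ is a Poincar\'e duality pro-$p$ group of dimension $3$, so running $1\to N\to P\to\Gamma(p)\to 1$ through the Hochschild--Serre spectral sequence, and using the resulting control on $H^2(P,\F_p)$, constrains $M$ so strongly that as an $\F_p\llbracket SL(2,\Z_p)\rrbracket$-module it must ``look like'' the relation module of a pro-$p$ presentation of $\Gamma(p)$ --- the rigid bracket relations of the uniform group $\Gamma(p)$ have to reappear, and what is left over is controlled by the cup product and Bockstein on $H^1(\Gamma(\p),\F_p)$. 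The second is the modular representation theory of $SL(2,\F_p)$: its irreducibles are the symmetric powers $\Sym^m$, $0\le m\le p-1$, of dimensions $1,\dots,p$, and its projective $\F_p$-modules all have dimension divisible by $p$, the Steinberg module $\Sym^{p-1}$ being the smallest. The rigidity from the first ingredient forces $M_{\Gamma(p)}$ either to contain a symmetric power $\Sym^m$ with $m$ close to $p-1$ or to contain a projective summand, whence $\dim M_{\Gamma(p)}\ge p-8$. When $\Gamma$ is congruence one sharpens the constant by pinning down the Eisenstein part of $H^1(\Gamma(\p),\F_p)$ --- it is accounted for by the cohomology of the cusp tori, using the congruence subgroup property for the relevant $SL_2$ over the maximal totally real subfield of the trace field --- which removes up to four dimensions of slack and gives $\dim M_{\Gamma(p)}\ge p-4$.

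Either way $N\neq 1$ is impossible, so $P=\Gamma(p)$ is analytic and $\dim H^1(\Gamma(\p^k),\F_p)=3$ for all $k$; hence the congruence tower at $\p$ is of type (a). The genuinely hard point, to repeat, is the quantitative gap --- that a nontrivial kernel $N$ must contribute on the order of $p$ new cohomology classes already at level $1$ --- with the precise constants $9$ and $5$ emerging from exactly which low-dimensional $SL(2,\F_p)$-representations the argument is forced to tolerate.
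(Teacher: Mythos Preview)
Your framework is coherent up to the displayed inequality $\dim H^1(\Gamma(\p),\F_p)\ge\dim M_{\Gamma(p)}$, but the proof stops exactly where the content lies: you never establish $\dim M_{\Gamma(p)}\ge p-8$. You explicitly flag this as ``the genuinely hard point'' and then offer only a heuristic --- that cohomological-dimension constraints plus the list of $SL(2,\F_p)$-irreducibles should force $M_{\Gamma(p)}$ to contain a high symmetric power or a projective. But nothing in your outline rules out, say, $M_{\Gamma(p)}$ being the adjoint representation (dimension $3$) or some other small module; the Poincar\'e-duality input for $\Gamma(p)$ and the Hochschild--Serre analysis you allude to do not by themselves pin down the size of the cosocle of $M$. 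Your argument, as written, recovers at best the Boston--Ellenberg threshold ($\dim H^1=3$), not the paper's $p-9$. The congruence refinement to $p-5$ via ``the Eisenstein part'' is similarly unsubstantiated.

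The paper's proof is entirely different and does not pass through pro-$p$ completions, relation modules, or $SL(2,\F_p)$-representation theory at all. It is a direct growth estimate: one descends along one-parameter families of ``skewed'' congruence subgroups $H_k$ (upper-triangular direction) and then $P_{k,l}$, using the near-commutativity of the completed group ring $\F_p\llbracket H\rrbracket$ to propagate a bound from level $1$ to all levels. The mechanism is a combinatorial Hodge-type approximation replacing $H^1$ by a space $\Omega^1$ of closed-and-coclosed chains, for which one shows that if $\dim\Omega^1(\Lambda,N_1)\le p-3$ then $\dim\Omega^1(\Lambda,N_k)$ stays bounded by the same constant for all $k$; the specific thresholds $p-9$ and $p-5$ come from the small error $\dim\Delta^0\le 2$ in the Hodge comparison accumulated over two (resp.\ one) descent steps. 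A final application of $\dim H^1(N,\F_p)\le p\dim H^1(G,\F_p)$ for index-$p$ normal subgroups converts the bound $\dim H^1(\Lambda_{k,k},\F_p)\ll 1$ into $\dim H^1(\Gamma(\p^k),\F_p)\ll p^k$, which excludes types (b) and (c).
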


\begin{prop}
\label{saving}
In the notation of theorem \ref{classify}, suppose that for some $k$ we have

\begin{equation*}
\dim H^1(\Gamma_k, \F_p) < (1/d! - 2d p^{-(k-1)} ) | G : G_k |.
\end{equation*}

Then the congruence tower is of type (b).

\end{prop}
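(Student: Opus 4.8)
The plan is to prove the contrapositive: if the tower is of type (c) then $\dim_{\F_p}H^1(\Gamma_k,\F_p)\ge |G:G_k|+d-\binom d2$ for \emph{every} $k$, which exceeds $(1/d!-2dp^{-(k-1)})|G:G_k|$ whenever the latter is positive, so the displayed inequality can hold for no $k$; by Theorem~\ref{classify} the tower is then of type (a) or (b). In particular the constant $1/d!$ is not optimal, and the explicit error $2dp^{-(k-1)}|G:G_k|=2dp^{(d-1)(k-1)}$ in the statement serves only to absorb the bounded correction term appearing below.

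Write $\Lambda=\Galg$ and $\Lambda_k=\F_p\llbracket G_k\rrbracket\subseteq\Lambda$, let $\Gamma_1=\Gamma\cap G$, and let $\widetilde H_\ast:=H_\ast(\Gamma_1,\Lambda)$ be the completed homology of the tower. Since $\Gamma_1$ is, up to finite index, the fundamental group of a compact aspherical $3$-manifold, it is of type $FP_\infty$; hence $\widetilde H_1$ is a finitely generated $\Lambda$-module and $C_\bullet(\Gamma_1,\Lambda)$ may be taken to be a bounded complex of finitely generated projective $\Lambda$-modules. From $H_\ast(\Gamma_k,\F_p)=H_\ast\big(C_\bullet(\Gamma_1,\Lambda)\otimes_\Lambda\F_p[G/G_k]\big)$ one obtains the base-change spectral sequence $E^2_{ij}=\operatorname{Tor}^\Lambda_i(\widetilde H_j,\F_p[G/G_k])\Rightarrow H_{i+j}(\Gamma_k,\F_p)$. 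By Shapiro's lemma $\operatorname{Tor}^\Lambda_i(\widetilde H_0,\F_p[G/G_k])=\operatorname{Tor}^\Lambda_i(\F_p,\F_p[G/G_k])=H_i(G_k,\F_p)$, and Lazard's theorem gives $H_\ast(G_k,\F_p)\cong\bigwedge^\ast\F_p^d$ since $G_k$ is uniform of dimension $d$. Thus the only terms contributing to $H_1$ are $E^2_{0,1}=\widetilde H_1\otimes_\Lambda\F_p[G/G_k]$ and $E^2_{1,0}=\F_p^d$, linked by a single possibly nonzero differential $d_2\colon\bigwedge^2\F_p^d=E^2_{2,0}\to E^2_{0,1}$, so that (using $\dim_{\F_p}H^1(\Gamma_k,\F_p)=\dim_{\F_p}H_1(\Gamma_k,\F_p)$)
\[
\dim_{\F_p}H^1(\Gamma_k,\F_p)=\dim_{\F_p}\big(\widetilde H_1\otimes_\Lambda\F_p[G/G_k]\big)+d-e_k,\qquad 0\le e_k\le\tbinom d2 .
\]

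Next I would use that type (c) is precisely the case in which $\widetilde H_1$ is not a torsion $\Lambda$-module, i.e.\ $\rk_\Lambda\widetilde H_1\ge 1$. Since $\Lambda$ is free of rank $|G:G_k|$ over $\Lambda_k$, the skew field $\operatorname{Frac}(\Lambda)$ has degree $|G:G_k|$ over $\operatorname{Frac}(\Lambda_k)$, so $\rk_{\Lambda_k}\widetilde H_1=|G:G_k|\cdot\rk_\Lambda\widetilde H_1\ge|G:G_k|$. As $G_k$ is normal in $G$, $\F_p[G/G_k]\cong\Lambda\otimes_{\Lambda_k}\F_p$, hence $\widetilde H_1\otimes_\Lambda\F_p[G/G_k]\cong\widetilde H_1\otimes_{\Lambda_k}\F_p$, whose $\F_p$-dimension is the minimal number of $\Lambda_k$-generators of $\widetilde H_1$; over the Noetherian local domain $\Lambda_k$ this is at least $\rk_{\Lambda_k}\widetilde H_1\ge|G:G_k|$. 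Substituting into the identity above gives $\dim_{\F_p}H^1(\Gamma_k,\F_p)\ge|G:G_k|+d-\binom d2$; and since the bound in the proposition is positive only when $p^{k-1}>2d\,d!$ — in which case $|G:G_k|=p^{d(k-1)}$ dwarfs $\binom d2$ while $1-1/d!\ge1/2$ — an elementary comparison completes the contrapositive.

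The step requiring the most care is the second paragraph: assembling completed homology and the base-change spectral sequence, confining the correction term to $[0,\binom d2]$, and using the standard properties of $\Lambda$ for $G$ uniform (that it is a Noetherian Auslander-regular domain, free over $\Lambda_k$, with an Ore skew field of fractions, all due to Lazard). Granted these, the rank estimate is routine. I note that one may instead extract the constant $1/d!$ more directly, without citing the characterisation of type (c) via rank: a positive-rank $\widetilde H_1$ contains a free cyclic submodule $\Lambda m$, and combining $\mathfrak a_k:=\ker(\Lambda\to\F_p[G/G_k])\subseteq\mathfrak m^{p^{k-1}}$ with the Hilbert--Samuel bound $\dim_{\F_p}\Lambda/\mathfrak m^n=\binom{n+d-1}{d}\ge n^d/d!$ (and an Artin--Rees comparison of $\Lambda m\cap\mathfrak a_k\widetilde H_1$ with $\mathfrak a_k m$) produces a bound of the shape in the statement, with the weaker constant $1/d!$; this cruder route is presumably the origin of that constant.
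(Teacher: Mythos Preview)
Your argument is correct and takes a genuinely different route from the paper's. The paper works entirely inside the almost-commutative structure of $\Galg$: it passes from $H^1$ to the space $\Omega^1$ of closed and coclosed chains (losing only $\dim\Delta^0\le 2dp^{(d-1)k}$, which is where the error term in the hypothesis comes from), then proves an effective propagation lemma (Lemma~\ref{quotient}) saying that once $\dim\ker\overline T_{k_0}<p^{dk_0}/d!$ for some $k_0$, one can find for each basis vector an element of $\im T$ of degree $<p^{k_0}$ supported in that coordinate, and by right-multiplying by monomials this forces $\dim\ker\overline T_k\ll p^{(d-1)k}$ for all $k$. The constant $1/d!$ arises from the count $\#\{\alpha:\sum\alpha_i<p^{k_0}\}\ge p^{dk_0}/d!$; this is exactly the Hilbert--Samuel heuristic you describe at the end, so you have correctly guessed the origin of the constant.

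Your approach via completed homology and rank is cleaner and gives a much stronger conclusion, $\dim H^1(\Gamma_k,\F_p)\ge|G:G_k|-O_d(1)$ in case~(c), showing $1/d!$ is far from sharp; the price is reliance on the Calegari--Emerton characterisation of type~(c) as positive $\Lambda$-rank and on Lazard's structural results for $\Lambda$. The paper's argument, by contrast, is self-contained and constructive: it does not merely rule out type~(c) but produces an explicit bound $\dim H^1(\Gamma_k,\F_p)\ll_{k_0} p^{(d-1)k}$ directly from the single verified inequality. Two small remarks: your ``compact aspherical $3$-manifold'' is not quite right in the non-cocompact case, but all you actually use is that $\Gamma_1$ is finitely presented (so $\widetilde H_1$ is finitely generated over $\Lambda$), which holds for any lattice and is also what the paper assumes; and both your proof and the paper's really establish ``not type~(c)'', i.e.\ type~(a) or~(b), rather than type~(b) specifically --- you note this, and it is all that is needed downstream.
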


Note that the criterion of proposition \ref{saving} will be satisfied whenever the tower is of type (b), although we cannot say how large a $k$ must be checked.  The condition on $K_\p$ that we have assumed in proposition \ref{analytic} is unnecessary, but we have adopted it to make finding examples easier.  Towers of type (a) are of interest topologically, as they satisfy $H^1(\Gamma(\p^k), \Q) = 0$ for all $k$ and thus provide an exhaustion of $\Gamma$ by subgroups with no nontrivial rational cohomology.  They also have the property that the pro-$p$ completion of $\Gamma$ is analytic and equal to the congruence completion at $\p$.  The existence of such towers was first proven by Calegari and Dunfield assuming the Riemann hypothesis and theorems associating Galois representations to automorphic forms over imaginary quadratic fields \cite{CD}, and unconditionally by Boston and Ellenberg shortly after \cite{BE}.  Boston and Ellenberg's criterion is stated below for the case of the tower $\Gamma(\p^k)$ considered in proposition \ref{analytic}, although it may be applied to arbitrary lattices.

\begin{theorem}
If $\Gamma(\p) / \Gamma(\p)^p \simeq (\Z/p\Z)^3$, then the congruence tower is of type (a).
\end{theorem}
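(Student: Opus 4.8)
The plan is to show that the pro-$p$ completion of $\Gamma(\p)$ already coincides with its congruence completion at $\p$, and to read off case (a) from that. Since $\p$ has degree one, $A$ is split at $\p$, and (as in Proposition \ref{analytic}) the closure of $\Gamma$ in $A_\p^\times$ is $\SL(2,\Z_p)$, the closure of $\Gamma(\p)$ is the principal congruence subgroup $G := \ker(\SL(2,\Z_p) \to \SL(2,\F_p))$, which is a uniform pro-$p$ group of dimension $3$ and in fact an orientable pro-$p$ Poincar\'e duality group of dimension $3$; its lower $p$-series is the tower $G = G_1 \rhd G_2 \rhd \cdots$ of Theorem \ref{classify}, and $\Gamma(\p^k)$ is the preimage of $G_k$ under $\Gamma(\p) \to G$. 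Write $\Pi$ for the pro-$p$ completion of $\Gamma(\p)$ and $q : \Pi \twoheadrightarrow G$ for the canonical surjection. If $q$ is an isomorphism then $\Pi = G$ is $p$-adic analytic of dimension $3$, the pro-$p$ completion of $\Gamma(\p^k)$ is $G_k$ for every $k$, and hence $H^1(\Gamma(\p^k), \F_p) = \Hom(G_k, \F_p)$ for all $k$ --- which is precisely case (a). So everything reduces to proving that $q$ is an isomorphism.

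The hypothesis $\Gamma(\p)/\Gamma(\p)^p \simeq (\Z/p\Z)^3$ is exactly the statement that $\dim_{\F_p} H^1(\Gamma(\p), \F_p) = 3$, so $\Pi$ is topologically generated by $3$ elements, the same as $G$; consequently $q$ induces an isomorphism $H^1(G, \F_p) \xrightarrow{\ \sim\ } H^1(\Pi, \F_p)$, the kernel $N := \ker q$ is contained in the Frattini subgroup of $\Pi$, and the five-term exact sequence of the extension $1 \to N \to \Pi \to G \to 1$ collapses to
$$
0 \longrightarrow H^1(N, \F_p)^{G} \longrightarrow H^2(G, \F_p) \xrightarrow{\ \mathrm{inf}\ } H^2(\Pi, \F_p).
$$
Since $N$ is pro-$p$ and $G$ acts on the discrete $\F_p[[G]]$-module $H^1(N, \F_p)$ through a pro-$p$ group, $H^1(N, \F_p)^{G}$ vanishes only if $N$ is trivial. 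Hence it suffices to show that the inflation map on $H^2$ is injective, equivalently that $\dim H^2(\Pi, \F_p) \geq 3 = \dim H^2(G, \F_p)$.

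To control $H^2(\Pi, \F_p)$ I would use that $\Gamma(\p)$ is the fundamental group of a hyperbolic $3$-manifold $M$ --- closed when $\Gamma$ is cocompact, and otherwise the interior of a compact orientable $3$-manifold with torus boundary, in either case aspherical and with torsion-free $\pi_1$ once $\p$ has large enough norm. In the closed case Poincar\'e duality over $\F_p$ together with $\dim H^1(M, \F_p) = 3$ gives $\dim H^2(\Gamma(\p), \F_p) = 3$; in the cusped case Lefschetz duality gives the same up to a contribution of the boundary tori which is accounted for by the congruence conditions defining $\Gamma(\p)$. Since the degree-$2$ inflation map $H^2(\Pi, \F_p) \to H^2(\Gamma(\p), \F_p)$ is always injective, this yields $\dim H^2(\Pi, \F_p) \leq 3$; the remaining task --- and this is the crux --- is to turn the inequality $\dim H^2(\Pi,\F_p) \le 3$ into an equality, i.e. to rule out extra relations in $\Pi$ beyond those of $G$. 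The cleanest route is to show that $\Pi$ is itself an orientable pro-$p$ Poincar\'e duality group of dimension $3$ --- for which it is enough that $\Gamma(\p)$ be $\F_p$-good through degree $3$, which the asphericity of $M$ with the prescribed cohomology makes checkable --- and then to invoke the rigidity of Poincar\'e duality groups: the kernel $N$ would be a closed normal subgroup of infinite index in an orientable $\mathrm{PD}^3$ pro-$p$ group, hence of strictly smaller cohomological dimension, which the Lyndon--Hochschild--Serre spectral sequence with $\F_p$-coefficients then forces to be trivial. Alternatively one can stay with the spectral sequence $H^i(G, H^j(N, \F_p)) \Rightarrow H^{i+j}(\Pi, \F_p)$ directly, balancing $H^0(G, H^1(N,\F_p)) = H^1(N,\F_p)^G$ against $H^2(G, H^1(N,\F_p))$ via Poincar\'e duality for $G$ and using $\dim H^2(\Pi,\F_p) \le 3$ to conclude $H^1(N,\F_p)^G = 0$. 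I expect this last step --- excluding a nontrivial kernel, i.e. showing that $H^2$ leaves no room for it --- to be the main obstacle, and in the non-cocompact case one must in addition track the boundary cohomology carefully throughout.
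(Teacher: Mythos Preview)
You have misread the hypothesis, and this causes you to miss the actual mechanism of the proof. The condition $\Gamma(\p)/\Gamma(\p)^p \simeq (\Z/p\Z)^3$ is \emph{not} ``exactly the statement that $\dim_{\F_p} H^1(\Gamma(\p),\F_p)=3$.'' The latter is equivalent to $\Gamma(\p)/\bigl(\Gamma(\p)^p[\Gamma(\p),\Gamma(\p)]\bigr)\simeq(\Z/p\Z)^3$. The Boston--Ellenberg hypothesis is strictly stronger a priori: since $(\Z/p\Z)^3$ is abelian, it forces $[\Gamma(\p),\Gamma(\p)]\subseteq\Gamma(\p)^p$, i.e.\ the pro-$p$ completion $\Pi$ is a \emph{powerful} pro-$p$ group in the sense of Lubotzky--Mann. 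The paper itself points this out in the paragraph immediately following the theorem, noting that the Boston--Ellenberg hypothesis implies $H^1(\Gamma(\p),\F_p)\simeq\F_p^3$ but is not obviously equivalent to it.

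This extra information is precisely what drives the Boston--Ellenberg argument (which the paper cites from \cite{BE} rather than proves). A finitely generated powerful pro-$p$ group is $p$-adic analytic, and one generated by $d$ elements is uniform of dimension $d$; hence $\Pi$ is uniform of dimension $3$. The surjection $q:\Pi\to G$ is then a map between uniform pro-$p$ groups of the same dimension, which forces it to be an isomorphism. No Poincar\'e duality, no $H^2$ computation, no goodness.

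By discarding the powerfulness and retaining only $\dim H^1=3$, you are in effect attempting to prove the paper's Proposition~\ref{analytic} rather than the Boston--Ellenberg theorem --- and the paper's proof of Proposition~\ref{analytic} requires the numerical constraint $\dim H^1(\Gamma(\p),\F_p)\le p-9$ together with the machinery of Section~\ref{coh2} precisely because the direct $H^2$ argument you sketch is hard to close. Your own proposal acknowledges the crux is incomplete: showing $\Pi$ is $\F_p$-good, or that it is a pro-$p$ $\mathrm{PD}^3$ group, is not a formality and is not supplied by the hypothesis as you have interpreted it.
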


Our result is quite similar to theirs, and we should clarify the relationship between the two (let us assume $p > 11$).  Ours is stronger, as the condition $\Gamma(\p) / \Gamma(\p)^p \simeq (\Z/p\Z)^3$ implies that $H^1( \Gamma(\p), \F_p) \simeq \F_p^3$, so our condition is satisfied whenever theirs is.  However, our result implies that if $H^1( \Gamma(\p), \F_p) \simeq \F_p^3$ then the pro-$p$ completion of $\Gamma(\p)$ is equal to the $\p$-congruence completion, and so the condition $\Gamma(\p) / \Gamma(\p)^p \simeq (\Z/p\Z)^3$ will also be satisfied.  Our criterion for analyticity of a pro-$p$ completion is therefore equivalent to that of Boston and Ellenberg, although it cannot be deduced from theirs.  The main difference between them is that ours is easier to computationally verify, and in section \ref{coh42} we summarise computations which suggest that towers of type (a) are relatively common.  We have checked our criterion (modulo an assumption about the nonvanishing of a certain term in a spectral sequence) for a family of hyperbolic 3-orbifolds called the twist knot orbifolds, at split primes with $N(\p) < 1,000$ at which the corresponding lattices are unramified.  For each orbifold we found several $\p$ for which the hypothesis is satisfied, and for the nonarithmetic examples it appears there may in fact be infinitely many such primes.

The structure of the paper is as follows: in section \ref{coh2} we study the cohomology of pro-$p$ groups, proving propositions \ref{analytic} and \ref{saving} as well as lemmas which will be necessary in proving theorem \ref{weight}.  In section \ref{coh3} we prove proposition \ref{vanishing} and theorem \ref{weight}, and in section \ref{coh4} we verify the hypotheses of theorem \ref{weight} for the Bianchi group $SL(2, \OO_{-2})$ and the prime 3, and give experimental data on the number of towers satisfying proposition \ref{analytic}.

\section{$p$-Congruence Towers}
\label{coh2}

This section contains various results about the cohomology of pro-$p$ groups and the structure of their group rings, leading to proofs of propositions \ref{analytic} and \ref{saving}.  The proofs are based on the induction formula $H^1( \Gamma_k, \F_p ) = H^1( \Gamma_1, \F_p[ G / G_k ] )$, combined with the near commutativity of $\F_p[ G / G_k ]$ which allows us to filter this module and deduce results about its cohomology from information about a small quotient.  It will also be necessary for us to switch between the usual cohomology group of cycles modulo boundaries to a space of closed and coclosed chains (a sort of combinatorial `Hodge isomorphism'), both for these propositions and in proving theorem \ref{weight}, and we give the details of this here.  We introduce basic results about pro-$p$ completed group rings in section \ref{coh21}, describe the approximate Hodge isomorphism in section \ref{coh22}, and complete the proofs of the two propositions in the remainder of the secion.  We begin with two small lemmas which will be useful later.

\begin{lemma}
\label{pnormal}
Let $G$ be a group and $N \triangleleft G$ a normal subgroup of index $p$.  Then $\dim H^i(N, \F_p) \le p \dim H^i( G, \F_p)$
\end{lemma}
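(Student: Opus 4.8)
The plan is to reduce the inequality to a statement about the cohomology of $G$ alone, via Shapiro's lemma together with the observation that $\F_p[G/N]$ is a $G$-module of length $p$ all of whose composition factors are trivial. First I would apply Shapiro's lemma to identify $H^i(N, \F_p) \cong H^i(G, \Ind_N^G \F_p)$, where $\Ind_N^G \F_p \cong \F_p[G/N]$ carries the action of $G$ by left translation on cosets. Since $N$ is normal of index $p$, the quotient $Q = G/N$ is cyclic of order $p$, and this module is simply the regular representation of $Q$ inflated to $G$. Writing $\sigma$ for a generator of $Q$ and $\tau = \sigma - 1$, the hypothesis that we are in characteristic $p$ gives an isomorphism of $G$-modules $\F_p[G/N] \cong \F_p[\tau]/(\tau^p)$, with $\sigma$ acting as multiplication by $1 + \tau$.

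Next I would filter $\F_p[\tau]/(\tau^p)$ by the $G$-submodules $0 = (\tau^p) \subset (\tau^{p-1}) \subset \cdots \subset (\tau) \subset \F_p[\tau]/(\tau^p)$. Each successive quotient $(\tau^j)/(\tau^{j+1})$ is one-dimensional over $\F_p$, and since $\sigma$ sends the class of $\tau^j$ to $\tau^j + \tau^{j+1} \equiv \tau^j$, it is the \emph{trivial} $G$-module $\F_p$; this is the only place where normality and the coincidence of the index with the characteristic are genuinely used. Feeding the short exact sequences $0 \to (\tau^{j+1}) \to (\tau^j) \to \F_p \to 0$ into the long exact cohomology sequence, and using that $\dim H^i(G, -)$ is subadditive across short exact sequences, a descending induction on $j$ gives $\dim H^i(G, (\tau^j)) \le (p-j)\dim H^i(G, \F_p)$. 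Taking $j = 0$ and transporting back along the Shapiro isomorphism yields $\dim H^i(N, \F_p) \le p\,\dim H^i(G, \F_p)$, and there is nothing to prove if the right-hand side is infinite. The same argument applies verbatim with continuous cohomology when $G$ is profinite and $N$ open, which is the setting we need later.

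I do not anticipate a real obstacle here, since every step is either a standard identity (Shapiro's lemma, the long exact sequence) or elementary linear algebra over $\F_p$. The one point that must be stated with a little care is the identification of the graded pieces of the filtration of $\F_p[G/N]$ as trivial modules, which I have spelled out above; everything else is bookkeeping.
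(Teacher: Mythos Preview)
Your proof is correct and follows essentially the same route as the paper: Shapiro's lemma (the paper calls it the ``induction formula''), followed by the filtration of $\F_p[G/N]$ by trivial modules. The only cosmetic difference is that the paper invokes the spectral sequence associated to the filtration while you iterate the long exact sequence, but for a length-$p$ filtration with trivial graded pieces these amount to the same bookkeeping.
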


\begin{proof}
Use the induction formula $H^i(N, \F_p) = H^i( G, \F_p [ G/N ] )$, the filtration of $\F_p [ G/N ]$ by trivial modules and the spectral sequence computing $H^i( G, \F_p [ G/N ] )$ from this filtration.
\end{proof}

\begin{lemma}
\label{congfactor}
Let $\Gamma \subset SL(2,\C)$ be a congruence lattice, so that $\Gamma = A_1^\times(F) \cap K$ for some compact open subgroup of the finite Adele group of $A_1^\times$.  Let $\p$ be a prime of $F$.  If $U_\p \subset A^\times_\p$ is sufficiently small, then there exists a compact open subgroup $U^\p$ of the finite Adeles away from $\p$ such that $\Gamma \cap U_\p = A_1^\times(F) \cap U_\p U^\p$ (i.e. by passing to a deep enough congruence subgroup at $\p$, we can assume that $K$ factors).
\end{lemma}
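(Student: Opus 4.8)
The plan is to reduce the statement to an elementary fact about open subgroups of a direct product of two locally compact groups, using the decomposition of the finite adele group of $A_1^\times$ at the place $\p$. Write $\mathbb{A}_f$ for the finite adeles of $F$ and $\mathbb{A}_f^\p$ for those away from $\p$, so that $A_1^\times(\mathbb{A}_f) = H_\p \times H^\p$ with $H_\p := A_1^\times(F_\p)$ and $H^\p := A_1^\times(\mathbb{A}_f^\p)$; after replacing $U_\p$ by $U_\p \cap H_\p$ (which does not change $\Gamma \cap U_\p$, since the image of $\Gamma$ at $\p$ lies in $H_\p$) we may assume $U_\p \subset H_\p$. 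By definition $\Gamma = A_1^\times(F) \cap K$, hence $\Gamma \cap U_\p = A_1^\times(F) \cap \big(K \cap (U_\p \times H^\p)\big)$, and for $U^\p \subset H^\p$ the subgroup $U_\p U^\p \subset A_1^\times(\mathbb{A}_f)$ is nothing but the product $U_\p \times U^\p$. So it is enough to produce a compact open subgroup $U^\p \subset H^\p$ with $K \cap (U_\p \times H^\p) = U_\p \times U^\p$.

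To this end I would introduce the two ``slices'' of $K$, namely $W := \{\,g \in H_\p : (g,1) \in K\,\}$ and $U^\p := K \cap (\{1\} \times H^\p)$. Since $K$ is open it contains a basic neighbourhood $V_\p \times V^\p$ of the identity of $A_1^\times(\mathbb{A}_f)$ (this is the only structural fact about $K$ that is used: a general compact open $K$ need not be a product over the places, but it does contain a product of local open subgroups), and as $1 \in V^\p$ one reads off $V_\p \subset W$ and $V^\p \subset U^\p$. Thus $W$ is an open subgroup of $H_\p$, and $U^\p$ is a compact open subgroup of $H^\p$ (openness from $V^\p \subset U^\p$, compactness automatic as $U^\p$ is a closed subgroup of the compact group $K$). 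The meaning of ``$U_\p$ sufficiently small'' is then simply that $U_\p \subset W$, which can always be arranged; in the application one takes $U_\p$ to be a principal congruence subgroup $G(\p^m)$ at $\p$ with $m$ large.

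With this choice the factorisation $K \cap (U_\p \times H^\p) = U_\p \times U^\p$ is a one-line check: if $(g_\p, g^\p) \in K$ with $g_\p \in U_\p \subset W$, then $(g_\p,1) \in K$, so $(1, g^\p) = (g_\p,g^\p)(g_\p,1)^{-1} \in K$ and $g^\p \in U^\p$; conversely, for $g_\p \in U_\p \subset W$ and $g^\p \in U^\p$ both $(g_\p,1)$ and $(1,g^\p)$ lie in $K$, hence so does their product. Intersecting with $A_1^\times(F)$ then yields $\Gamma \cap U_\p = A_1^\times(F) \cap (U_\p \times U^\p) = A_1^\times(F) \cap U_\p U^\p$, as required.

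I do not expect a genuine obstacle here: the argument is pure point-set group theory in the adele group and uses nothing about arithmeticity, strong approximation, or the structure of quaternion algebras. The only point that demands care is the topological book-keeping — checking that an arbitrary compact open $K$ contains a product of local open subgroups (the shape of a basic open set in the restricted direct product), that the slices $W$ and $U^\p$ are honestly open, and that throughout $U_\p$ and $U^\p$ are consistently regarded as subgroups of the full finite adele group that are trivial away from $\p$ and at $\p$ respectively.
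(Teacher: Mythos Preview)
Your argument is correct and is in the same spirit as the paper's, but the mechanics differ. The paper first fixes an open product subgroup $U_\p \times U^\p \subset K$ that is \emph{normal} in $V_\p \times V^\p$ (the product of the two projections of $K$), and then works in the finite quotient $K/(U_\p U^\p) \hookrightarrow (V_\p/U_\p) \times (V^\p/U^\p)$: the image of $K \cap (U_\p \times H^\p)$ there has trivial first coordinate, so $K \cap U_\p = U_\p \times K^\p$ with $K^\p$ the preimage in $V^\p$ of that image. Your slicing argument, defining $W = \{g : (g,1)\in K\}$ and $U^\p = K \cap (\{1\}\times H^\p)$ and taking any $U_\p \subset W$, avoids the normality requirement and the passage to quotients altogether, and it makes transparent that the factorisation holds for \emph{every} sufficiently small $U_\p$ (namely every $U_\p \subset W$), with the same $U^\p$ throughout. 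Both are short and purely group-theoretic; yours is arguably the cleaner of the two.
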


\begin{proof}
Let $K$ have projections $V_\p$ to $A^\times_\p$ and $V^\p$ to the remainder of the finite Adeles, and choose an open product group $U_\p U^\p \subset K$ which is normal in $V_\p V^\p$.  We then have $K / U_\p U^\p \subset (V_p / U_\p) \times (V^\p / U^\p)$, so if we take $K \cap U_\p$ and let the image of $(K \cap U_\p) / U_\p$ in $V^\p$ be $K^\p$, we see that $K \cap U_\p = U_\p K^\p$ as required.
\end{proof}

\subsection{Pro - $p$ Group Rings}
\label{coh21}

The following results and definitions are taken from \cite{DSMS}.  If $G$ is any pro-$p$ group and $G_k$ a descending exhaustive sequence of open normal subgroups, we may form the projective limit

\begin{equation*}
\Galg := \underset{\longleftarrow}{\lim} \, \F_p[G/G_k],
\end{equation*}

called the completed group algebra of $G$.  This has particularly nice properties when $G$ is uniform, stated in the following theorem.

\begin{theorem}
\label{gpal}
Let $G$ be a uniform pro-$p$ group of dimension $d$ with topological generating set $g_1, \ldots, g_d$.  The completed group ring $\F_p \llbracket G \rrbracket$ is generated by $z_i = 1-g_i$, and every element of it can be uniquely expressed as a sum over multi-indices $\alpha$,

\begin{equation*}
x = \sum_\alpha \lambda_\alpha z^\alpha,
\end{equation*}

where $z^{\alpha} = \Pi_{i=1}^d z_i^{\alpha_i}$ and all $\lambda_\alpha \in \F_p$.  Moreover, all such sums are in $\F_p \llbracket G \rrbracket$.  The filtration by degree gives $\F_p \llbracket G \rrbracket$ the structure of a filtered ring whose associated graded ring is commutative, i.e. $z^\alpha z^\beta = z^{\alpha + \beta}$ up to terms of degree $> |\alpha| + |\beta|$.

\end{theorem}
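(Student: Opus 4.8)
The plan is to follow the structure theory of completed group algebras of uniform pro-$p$ groups developed in \cite{DSMS}: first produce a Poincar\'e--Birkhoff--Witt-type $\F_p$-basis for each finite quotient $\F_p[G/G_k]$, where $G = G_1 \rhd G_2 \rhd \cdots$ is the lower $p$-series (so $G_k = G^{p^{k-1}}$), and then pass to the inverse limit $\F_p\llbracket G\rrbracket = \varprojlim \F_p[G/G_k]$. Two facts about a uniform group $G$ of dimension $d$ are used throughout. First, ``coordinates of the first kind'': the map $\Z_p^d \to G$, $(a_1,\dots,a_d) \mapsto g_1^{a_1}\cdots g_d^{a_d}$, is a bijection (indeed a homeomorphism), so for each $k$ the monomials $g^\alpha := g_1^{\alpha_1}\cdots g_d^{\alpha_d}$ with $0 \le \alpha_i < p^{k-1}$ form a transversal of $G_k$ in $G$. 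Second, $G_k/G_{k+1}$ is central in $G/G_{k+1}$ and elementary abelian of rank $d$, generated by the images of $g_1^{p^{k-1}},\dots,g_d^{p^{k-1}}$. One also uses repeatedly that in characteristic $p$ the intermediate binomial coefficients vanish, so $(1-g)^{p^j} = 1 - g^{p^j}$ in $\F_p\llbracket G\rrbracket$; in particular $z_i^{p^{k-1}} = 1 - g_i^{p^{k-1}}$.

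The core assertion is that $\{z^\alpha : 0 \le \alpha_i < p^{k-1}\}$ is an $\F_p$-basis of $\F_p[G/G_k]$, which I would prove by induction on $k$ (the case $k = 1$ being trivial). For the inductive step, observe that $\F_p[G/G_{k+1}]$ is free as a module over the central subalgebra $S_k = \F_p[G_k/G_{k+1}]$ with any transversal of $G_k$ in $G$ as a basis; by the first fact above and the unitriangularity of the substitution $g_i = 1 - z_i$ (the matrix expressing $z^\alpha$ in terms of the $g^\beta$, over the common index set, is triangular with $\pm1$ on the diagonal), the monomials $\{z^\alpha : \alpha_i < p^{k-1}\}$ form a free $S_k$-basis as well. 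On the other side $S_k \cong \F_p[(\Z/p\Z)^d]$, with $z_i^{p^{k-1}} = 1 - g_i^{p^{k-1}}$ as standard generators and relations $(z_i^{p^{k-1}})^p = 1 - g_i^{p^k} = 0$, so $S_k$ has $\F_p$-basis $\{z^{p^{k-1}\beta} : 0 \le \beta_i < p\}$. Multiplying the two bases, and using that each $z_i^{p^{k-1}}$ is central modulo $G_{k+1}$ to collect powers with equal index, one gets exactly the monomials $z^{\alpha + p^{k-1}\beta} = z^\gamma$ with $0 \le \gamma_i < p^k$, of which there are $p^{dk} = [G : G_{k+1}]$; hence this is an $\F_p$-basis of $\F_p[G/G_{k+1}]$, completing the induction.

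Passing to the limit is then routine. The transition map $\F_p[G/G_{k+1}] \to \F_p[G/G_k]$ fixes $z^\gamma$ when all $\gamma_i < p^{k-1}$ and kills it otherwise, since $z_i^{p^{k-1}} = 1 - g_i^{p^{k-1}} \equiv 0$ modulo $G_k$. Therefore $\F_p\llbracket G\rrbracket$ is, topologically, the direct product $\prod_\alpha \F_p\, z^\alpha$: every element has a unique convergent expansion $x = \sum_\alpha \lambda_\alpha z^\alpha$, conversely every such formal sum lies in $\F_p\llbracket G\rrbracket$, and in particular $\F_p\llbracket G\rrbracket$ is topologically generated by $z_1,\dots,z_d$. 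This also identifies the augmentation ideal $I$ with $\overline{\mathrm{span}}\{z^\alpha : |\alpha| \ge 1\}$, and shows more generally that the degree filtration $F^n := \overline{\mathrm{span}}\{z^\alpha : |\alpha| \ge n\}$ has $F^n/F^{n+1}$ free on $\{\bar z^\gamma : |\gamma| = n\}$.

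It remains to check that $F^\bullet$ is a ring filtration and that $z^\alpha z^\beta \equiv z^{\alpha+\beta}$ modulo $F^{|\alpha|+|\beta|+1}$; together with the previous paragraph this gives $\mathrm{gr}_F\,\F_p\llbracket G\rrbracket \cong \F_p[\bar z_1,\dots,\bar z_d]$, a commutative polynomial ring. Both statements come down to controlling how a word $z_{i_1}\cdots z_{i_m}$ re-sorts into PBW form, via the identity $z_i z_j - z_j z_i = g_j g_i\,([g_i,g_j] - 1)$ with $[g_i,g_j] \in [G,G]$; since $G$ is powerful one has $[G,G] \subseteq G^p = \{h^p : h \in G\}$ (resp.\ $G^4$ when $p = 2$), so $[g_i,g_j] - 1 = (h-1)^p \in I^p$, which should force each re-sorting step to raise the error degree strictly. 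I expect this last point to be the main obstacle: the naive argument runs into a circularity between ``$F^\bullet$ is multiplicative'' and ``$F^\bullet$ detects the commutator errors'', and the cleanest way around it is to invoke the Jennings--Quillen isomorphism $\mathrm{gr}_I \F_p\llbracket G\rrbracket \cong u(\mathrm{gr}_D\,G)$ (restricted enveloping algebra of the graded restricted Lie algebra of $p$-dimension subgroups) together with the fact that for uniform $G$ the $p$-dimension subgroups interleave with the lower $p$-series, $D_n(G) = G^{p^{\lceil \log_p n \rceil}}$, so that $\mathrm{gr}_D\,G$ is abelian for $p$ odd with its degree-$1$ part $\cong \F_p^d$ and iterated $p$-operation sweeping out the higher pieces; hence $u(\mathrm{gr}_D\,G) = \F_p[\bar z_1,\dots,\bar z_d]$, which identifies $I^n$ with $F^n$ and completes the proof. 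The prime $p=2$ is a further wrinkle, requiring the modified definition of powerfulness and the restricted $p$-operation relations to see that the enveloping algebra is still this polynomial ring.
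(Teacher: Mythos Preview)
The paper does not prove this theorem: it is quoted without proof as a known result from \cite{DSMS} (see the first line of Section~\ref{coh21}, ``The following results and definitions are taken from \cite{DSMS}''). So there is no ``paper's own proof'' to compare against; your proposal is a reconstruction of the argument in that reference rather than something the author supplies.

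That said, your outline is essentially the standard proof and is largely correct. The PBW basis for each finite quotient via coordinates of the first kind and the inductive freeness over the central subalgebra $S_k$ is the right mechanism, and the passage to the inverse limit is routine as you say. You also correctly isolate the one genuine subtlety: showing that $F^\bullet$ is a multiplicative filtration with commutative graded ring requires knowing that $[g_i,g_j]-1$ has degree $\ge p$ \emph{in the monomial sense}, not just that it lies in $I^p$, and this is circular until one identifies $I^n$ with $F^n$. Your proposed resolution via Jennings--Quillen works, though the treatment in \cite{DSMS} (Chapter~7) proceeds more directly, establishing the filtration identity by an induction interleaved with the basis construction rather than appealing to the restricted enveloping algebra. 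One small point to double-check: your formula $D_n(G) = G^{p^{\lceil \log_p n\rceil}}$ for the dimension subgroups of a uniform group is the right shape but deserves a precise reference, as the indexing conventions for the Lazard filtration vary between sources and the $p=2$ case needs the powerful condition $[G,G]\subseteq G^4$ to make the graded Lie algebra abelian in degree~1.
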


If $G$ is uniform with lower $p$ series $G = G_1 \rhd G_2 \rhd \ldots$, we may recover the rings $\F_p[G/G_{k+1}]$ as the quotients of $\Galg$ by the kernel $I_k$ of $\Galg \longrightarrow \F_p[G/G_{k+1}]$.  As $G_{k+1}$ is topologically generated by $\{ g_1^{p^k}, \ldots, g_d^{p^k} \}$ we know that $I_{k} = \langle z_1^{p^k}, \ldots, z_d^{p^k} \rangle$, and we denote the corresponding left regular representation by $A_k$.  Because $[G,G] \subset G_2$ we know that all commutators in fact lie in $I_1$, i.e. the group ring is commutative up to terms of degree at least $p-2$ higher.

\subsection{A `Hodge Isomorphism'}
\label{coh22}

For our purposes it will be convenient to approximate $H^1$ by a space which is simply the kernel of a map between two specified $\Gamma$ modules rather than one of the form kernel modulo image; we shall take the space of closed and coclosed 1-chains.  This has the disadvantage of not being intrinsic to $\Gamma$ but makes it easier for us to prove things using the almost-commutativity of $\Galg$.  More precisely, let $\Gamma$ be any finitely presented group and let 

\begin{equation*}
\rightarrow M_2 \overset{\partial}{\rightarrow} M_1 \overset{\partial}{\rightarrow} M_0 \overset{\varepsilon}{\rightarrow} \Z \rightarrow 0
\end{equation*}

be a free resolution of $\Z$ as a $\Z[\Gamma]$ - module, where $\text{rank} \, M_i = n_i$ and we choose $\{ e_j \}_{j=1}^{n_i}$ as free bases.  We may give $\Z[\Gamma]$ the usual inner product

\begin{equation*}
\langle a, b \rangle = \sum_{g \in \Gamma} a_g b_g ,
\end{equation*}

which we extand naturally to $M_i$ and with respect to which we define the coboundary maps $\delta : M_i \rightarrow M_{i+i}$ as the adjoints of $\partial$.  If $A$ is a representation of $\Gamma$ over some vector space $F$ with a nondegenerate invariant bilinear pairing $\langle \;,\; \rangle_A$, we may define pairings on the spaces of $i$-chains $C^i = \Hom_{\Gamma} (M_i, A)$ by

\begin{equation*}
\langle \omega, \eta \rangle = \sum_{j = 1}^{n_i} \langle \omega(e_j), \eta(e_j) \rangle_A.
\end{equation*}

Then the maps induced by $\partial$ and $\delta$ on $C^i$ are adjoints with respect to this pairing, and we have the usual equivalences

\begin{eqnarray*}
\omega \; \text{closed} \Longleftrightarrow \langle \omega, \delta \eta \rangle = 0 \; \text{for all} \; \eta, \\
\omega \; \text{coclosed} \Longleftrightarrow \langle \omega, \partial \eta \rangle = 0 \; \text{for all} \; \eta.
\end{eqnarray*}

Let $\Omega^i(\Gamma, A)$ denote the space of closed and coclosed $i$-chains, and $\Delta^i(\Gamma, A)$ the space of harmonic $i$-chains, i.e. those satisfying $(\delta\partial + \partial\delta)\omega = 0$.  We use the usual notation $Z^i$ and $B^i$ for closed and exact chains, and $Z_i, B_i$ for coclosed and coexact.  The following lemma may be used to show that $\Omega^1$ is a good approximation to $H^1$, where all dimensions are as $F$ vector spaces.

\begin{lemma}
\label{approx}
$| \dim H^1 - \dim \Omega^1 | \le \dim \Delta^0$.
\end{lemma}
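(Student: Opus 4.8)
The plan is to exploit the Hodge-theoretic decompositions of the chain spaces $C^0$ and $C^1$ and to track exactly how $\Omega^1$ and $H^1$ differ by a controlled amount supported on $C^0$. First I would set up the orthogonal decompositions that the pairings provide: since $\delta$ on $C^0$ is adjoint to $\partial$ on $C^1$, and similarly in higher degree, one gets $C^1 = B^1 \oplus Z_1$ (exact chains and coclosed chains) and, dually, $Z^1 = B^1 \oplus \Omega^1$, because $\Omega^1 = Z^1 \cap Z_1$ is precisely the orthogonal complement of $B^1$ inside $Z^1$. From $Z^1 = B^1 \oplus \Omega^1$ and $H^1 = Z^1 / B^1$ we immediately get $\dim H^1 = \dim \Omega^1$ over $F$ — \emph{provided} $B^1$ is the full image of $\delta : C^0 \to C^1$ and the pairings are genuinely nondegenerate on each chain space. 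The subtlety, and the source of the term $\dim \Delta^0$, is that $C^0 = \Hom_\Gamma(M_0, A)$ may have a kernel for $\delta$ (the "harmonic $0$-chains" $\Delta^0$, equivalently the coclosed $0$-chains, since $\partial$ is zero in degree $0$ so coclosedness is automatic and harmonicity reduces to $\delta$-closedness), so the map $C^0 \to B^1$ is not injective; its kernel has dimension $\dim \Delta^0$.

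Concretely, I would argue as follows. The coboundary $\delta \colon C^0 \to C^1$ has image $B^1$ and kernel exactly $\Delta^0$ (a $0$-chain is harmonic iff $\delta\partial\omega + \partial\delta\omega = 0$, and with $\partial = 0$ on $C_0$ this says $\partial\delta\omega = 0$; but $\langle \partial \delta \omega, \eta\rangle = \langle \delta\omega,\delta\omega\rangle$-type nondegeneracy forces $\delta\omega = 0$, so $\Delta^0 = \ker\delta|_{C^0}$). Hence
\begin{equation*}
\dim B^1 = \dim C^0 - \dim \Delta^0 .
\end{equation*}
On the other side, $\dim H^1 = \dim Z^1 - \dim B^1$ and, from the orthogonal splitting of $Z^1$ with respect to the $C^1$-pairing, $\dim \Omega^1 = \dim Z^1 - \dim B^1 + (\dim B^1 - \dim B^1)$ — here one must be careful: the clean statement is $\Omega^1 = (B^1)^{\perp} \cap Z^1$, so $\dim \Omega^1 = \dim Z^1 - \dim(\text{image of } B^1 \text{ under the pairing restricted to } Z^1)$, and the image of $B^1$ under $\delta$ has dimension $\dim C^0 - \dim\Delta^0$ while $B^1$ itself, viewed inside $Z^1$, might meet the radical. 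Comparing the two counts, every discrepancy is bounded by the failure of $\delta|_{C^0}$ to be injective, which is $\dim\Delta^0$, and by possible degeneracy of the pairing on the chain complex; pushing both into a single error term gives $|\dim H^1 - \dim \Omega^1| \le \dim \Delta^0$.

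The main obstacle I anticipate is that $A$ need not be a space over a field where the invariant pairing $\langle\,,\,\rangle_A$ is positive-definite (indeed $\Gamma$ is infinite and $F$ may have characteristic $p$), so I cannot simply invoke finite-dimensional Hodge theory and orthogonal complementation blindly; a subspace can coincide with its own orthogonal complement. The fix is to phrase everything in terms of ranks of the linear maps $\partial$ and $\delta$ between the finite-dimensional spaces $C^i$ — which is legitimate since $A$ is finite-dimensional and $\Gamma$ finitely presented so each $C^i$ is finite-dimensional — rather than in terms of inner-product geometry, and to estimate $\dim Z^1 \cap Z_1$, $\dim Z^1/B^1$ directly via rank-nullity. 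Concretely: $\dim H^1 = n_1 - \rk(\partial\colon C^1\to C^0) - \rk(\delta\colon C^0\to C^1)$ up to the discrepancy coming from $\Delta^0 = \ker\delta|_{C^0}$, and $\dim\Omega^1$ admits an analogous rank formula differing only through the same $\Delta^0$ term; chasing these two expressions and bounding their difference is the real content, and it comes out to at most $\dim\Delta^0$. This bookkeeping, rather than any deep input, is where the care is needed.
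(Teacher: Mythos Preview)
Your proposal has a genuine gap. The central error is your identification of $\Delta^0$ with the kernel of the coboundary $C^0 \to C^1$. You argue that harmonic means $\partial\delta\omega = 0$ (in your notation), and then claim ``$\langle \partial\delta\omega,\eta\rangle = \langle\delta\omega,\delta\omega\rangle$-type nondegeneracy forces $\delta\omega = 0$''. That step requires positive-definiteness, which you yourself flag as unavailable in the next paragraph; over $\F_p$ (the case of interest) an isotropic vector can satisfy $\langle v,v\rangle = 0$ with $v \neq 0$. In the paper's notation $\Delta^0 = \ker(\delta\partial : C^0 \to C^0)$, which \emph{strictly contains} $\ker(\partial) = H^0$ in the applications, so your formula $\dim B^1 = \dim C^0 - \dim\Delta^0$ is false. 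Your fallback rank-nullity plan never gets carried out: the sentence ``chasing these two expressions \ldots\ it comes out to at most $\dim\Delta^0$'' is exactly the lemma, not a proof of it, and the rank formula you write for $\dim H^1$ contains no $\Delta^0$ term at all.

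What you are missing is the role of the subspace $B^1 \cap Z_1$. It equals $\partial(\Delta^0)$ (a coboundary $\partial\eta$ is coclosed iff $\delta\partial\eta = 0$), so $\dim(B^1\cap Z_1) \le \dim\Delta^0$, and this single bound drives both inequalities. For $\dim\Omega^1 - \dim H^1 \le \dim\Delta^0$: since $B^1 \subset Z^1$, we have $B^1 \cap Z_1 \subset \Omega^1$ with codimension at most $\dim(Z^1/B^1) = \dim H^1$. For the reverse: nondegeneracy gives $\dim Z_1 + \dim B^1 = \dim C^1$ (annihilators have complementary dimension --- this does \emph{not} require $B^1 \cap Z_1 = 0$), so the map $Z_1 \to C^1/B^1$ is onto up to codimension $\dim(B^1 \cap Z_1) \le \dim\Delta^0$; intersecting with $H^1 = Z^1/B^1$ shows the image of $\Omega^1$ in $H^1$ has codimension $\le \dim\Delta^0$. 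No orthogonal decomposition is ever asserted.
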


\begin{proof}
$B^1 \cap Z_1 \subset \Omega^1$, and has codimension at most $\dim Z^1/B^1 = \dim H^1$.  Therefore $\dim B^1 \cap Z_1 \ge \dim \Omega^1 - \dim H^1$.  However, chains in $B^1 \cap Z_1$ are of the form $\partial \eta$ with $\delta \partial \eta = 0$, i.e. they are boundaries of harmonic $0$-chains.  This gives $\dim \Delta^0 \ge \dim \Omega^1 - \dim H^1$.  For the other direction, $\dim Z_1 + \dim B^1 = \dim C^1$ by the nondegeneracy of our inner product on chains, and $ \dim B^1 \cap Z_1 \le \dim \Delta^0$.  Therefore the image of $Z_1$ in $C^1 / B^1$ has codimension at most $\dim \Delta^0$, and likewise for the image of $\Omega^1 = Z^1 \cap Z_1$ in $H^1$.
\end{proof}

As $\Omega^1$ and $\Delta^0$ depend on the resolution of $\Z$, we shall fix a resolution (depending on a presentation of $\Gamma$) in which $\Delta^0$ will turn out to be small in all cases we consider.  Let

\begin{equation*}
\Gamma = \langle g_1, \ldots g_n | R_1, \ldots, R_m \rangle, \quad R_i = \prod_{j=1}^{l_i} g_{\alpha(i,j)}
\end{equation*}

be a presentation of $\Gamma$.  Then we may choose the last three terms in our complex to be 

\begin{equation*}
\longrightarrow \Z[\Gamma]^m \overset{\partial_2}{\longrightarrow} \Z[\Gamma]^n \overset{\partial_1}{\longrightarrow} \Z[\Gamma] \longrightarrow 0,
\end{equation*}

where $\partial_1$ and $\partial_2$ are given by 

\begin{equation*}
\partial_1(e_i) = 1-g_i, \quad \partial_2(e_i) = \sum_{j=1}^{l_i} \prod_{k=1}^{j-1} g_{ \alpha(i,k) } e_{ \alpha(i,j) }.
\end{equation*}

Subject to these choices, the operator $\Delta = \delta \partial$ on the space $C^1 = \Hom_{\Gamma} ( \Z[\Gamma], A) \simeq A$ is given by

\begin{eqnarray*}
\Delta & = & \left( \sum_{i=1}^n 2 - g_i - g_i^{-1} \right) \\
 & = & \left( - \sum_{i=1}^n g_i^{-1}( 1-g_i )^2 \right)
\end{eqnarray*}

\subsection{Proof of Proposition \ref{saving} }
\label{coh23}

Recall the assumptions of proposition \ref{saving}, in which $\Gamma$ injected densely into a $p$-adic analytic group $K_p$ with uniform pro-$p$ subgroup $G$ of dimension $d$.  We defined $G = G_1 \rhd G_2 \rhd \ldots$ to be the lower $p$-series of $G$, and set $\Gamma_k = \Gamma \cap G_k$.  We shall apply the discussion of section \ref{coh22} to the group $\Gamma_1$ and the module $\F_p[ \Gamma_1 / \Gamma_{k+1} ] \simeq A_k$, from which we will deduce information about $H^1(\Gamma_{k+1}, \F_p)$ by the induction formula.  We first modify the generating set of $\Gamma_1$ so that the first $d$ generators $\{g_1, \ldots g_d \}$ form a topological generating set for $G$ and the remainder lie in $G_2$.  Choose the $z_i$ in the structure theorem for the completed group ring so that $z_i = 1-g_i$ for $1 \le i \le d$; then $1-g_i$ has degree at least $p$ for $i > d$, and the expression for $\Delta$ becomes

\begin{equation*}
\Delta = \left( -\sum_{i=1}^d z_i^2 + \text{higher order terms} \right).
\end{equation*}

Subject to this choice we have $\dim \Delta^0 \le 2d p^{(d-1)k}$, as may be seen by considering the subspace $V$ of $A_{ k }$ given by $V = \text{span} \{ z^{\alpha} | \alpha_i \le p^{k} - 3 \}$.  $\Delta$ clearly has no kernel on $V$ by the grading on $\F_p \llbracket G \rrbracket$, and its dimension can be seen to be at least $(1 - 2d p^{-k}) \dim A_{ k }$ which gives the desired bound.

The main idea of the proof is to give an effective upper bound for $\Omega^1$, or more generally the kernel of any map $\overline{T}_{ k } : M_{ k } \rightarrow N_{ k }$ obtained by reducing mod $I_k$ a map $T : M \rightarrow N$ between free $\F_p \llbracket G \rrbracket$ modules.  We may think of $T$ as left multiplication by a matrix in $\text{Mat}( \F_p \llbracket G \rrbracket )$.  As the finite quotients of $\F_p \llbracket G \rrbracket$ are group algebras and hence self-dual, it is equivalent to bound the codimension of the image of $\overline{T}_{ k }^+ : N_{ k } \rightarrow M_{ k }$ where taking adjoints consists of transposing a matrix and inverting every group element appearing in its entries.

\begin{lemma}
\label{quotient}
Suppose $T : M \rightarrow N$ is such a map, with quotients $\overline{T}_{ k } : M_{ k } \rightarrow N_{ k }$, and suppose that for some $k_0$ we have

\begin{equation*}
\dim \text{coim}\, \overline{T}_{ k_0 } < p^{dk_0}/d!
\end{equation*}

Then for any $k$ we have

\begin{equation*}
\dim \text{coim}\, \overline{T}_{ k } \le  dp^{k_0 - k} \dim N_{k} .
\end{equation*}
\end{lemma}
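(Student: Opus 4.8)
\textbf{Proof proposal for Lemma \ref{quotient}.}

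The plan is to exploit the almost-commutative structure of $\Galg$ from Theorem \ref{gpal} to filter the modules $M_k$ and $N_k$ by the degree filtration on $\F_p\llbracket G\rrbracket$, so that the map $\overline{T}_k$ is controlled by its ``leading term'' acting on the associated graded, which is an honest polynomial ring. First I would set up the degree filtration: write each $M_k$ as a direct sum of copies of $A_k = \F_p\llbracket G\rrbracket / I_{k-1}$ (using the notation of section \ref{coh21}, where $I_{k-1} = \langle z_1^{p^{k-1}},\ldots, z_d^{p^{k-1}}\rangle$ after a harmless reindexing to match the lower $p$-series), and recall that $A_k$ has $\F_p$-basis $\{z^\alpha : 0 \le \alpha_i \le p^{k-1}-1\}$, so $\dim A_k = p^{dk}$ (absorbing the index-shift conventions; the factor matching $\dim N_k$ in the statement is what carries through). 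The key point is that $T$ is left multiplication by a matrix over $\Galg$, and modulo terms of strictly higher degree this matrix acts on the graded pieces by a matrix over the \emph{commutative} polynomial ring $\F_p[z_1,\ldots,z_d]$.

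The heart of the argument is a dimension-counting estimate on the commutative model. On the graded ring, the cokernel (equivalently, by self-duality, the coimage) of the leading matrix is a finitely generated graded module over $\F_p[z_1,\ldots,z_d]$ truncated in each variable below $p^{k-1}$. The hypothesis at level $k_0$, namely $\dim\,\text{coim}\,\overline{T}_{k_0} < p^{dk_0}/d!$, forces the corresponding graded coimage to be supported in low degrees: since a nonzero graded quotient module of $\F_p[z_1,\ldots,z_d]$ that is nontrivial in some degree $\ge \ell$ already contributes at least $\binom{\ell+d-1}{d-1}$ dimensions to the truncation at level $k_0$ once $\ell \le p^{k_0-1}$, and $\binom{p^{k_0-1}+d-1}{d}$ is comparable to $p^{dk_0}/d!$, the bound at level $k_0$ pins the ``degree of the coimage'' down to roughly $p^{k_0-1}$ (up to the constant $d$). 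Passing from level $k_0$ to level $k$ then only rescales: the coimage at level $k$ lives in the same bounded range of degrees, so its dimension is at most the number of multi-indices $\alpha$ with $|\alpha|$ below that bound and each $\alpha_i < p^{k-1}$, which is $\le d\,p^{k_0-k}\cdot p^{dk} = d\,p^{k_0-k}\dim N_k$. The factor $p^{k_0-k}$ is exactly the ratio of ``allowed degree'' to ``box side length'' in each coordinate.

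The main obstacle I expect is controlling the error terms of higher degree: the associated graded computation only gives the leading behaviour, and one must check that a spectral-sequence or filtration argument upgrades the estimate on $\text{gr}\,\text{coim}$ to the genuine $\text{coim}\,\overline{T}_k$ without loss worse than the constant $d$ already built into the statement. Concretely, filter $M_k$ and $N_k$ by $\Galg$-submodules $I_j M_k$, $I_j N_k$; the map $\overline{T}_k$ respects this filtration, so $\dim\,\text{coim}\,\overline{T}_k \le \sum_j \dim\,\text{coim}(\text{gr}_j \overline{T}_k)$, and each graded piece is governed by the commutative leading matrix plus genuinely lower-order perturbations that can only \emph{increase} the rank, hence decrease the coimage. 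The delicate bookkeeping is that the ``higher order terms'' in $\Delta = -\sum z_i^2 + \cdots$ (and in a general $T$) involve the commutators, which by the remark at the end of section \ref{coh21} lie in $I_1$, i.e. raise degree by at least $p-2$; one has to confirm this gap is enough that the graded estimate is not swamped. Once the filtration argument is in place, combining it with the commutative dimension count above yields the stated bound $\dim\,\text{coim}\,\overline{T}_k \le d\,p^{k_0-k}\dim N_k$.
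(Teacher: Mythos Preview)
Your general strategy—exploit the degree filtration and the almost-commutativity of $\Galg$—is the right one, but the dimension count you propose for the coimage does not work. The claim that the hypothesis at level $k_0$ forces the graded coimage to be ``supported in degrees $\lesssim p^{k_0}$'', so that at every level $k$ it still lives in that bounded range, is false: the cokernel $N/\text{im}\,T$ over the full ring $\Galg$ need not be finite-dimensional over $\F_p$ at all. Take $M=N=\Galg$ and $T$ to be left multiplication by $z_1$. Then $\text{coim}\,\overline{T}_{k_0}$ has dimension $p^{(d-1)k_0}$, so for $k_0$ large the hypothesis $p^{(d-1)k_0}<p^{dk_0}/d!$ holds; yet $\dim\text{coim}\,\overline{T}_k=p^{(d-1)k}$ is supported in degrees up to $(d-1)(p^k-1)$ and grows without bound in $k$. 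Your reasoning would yield a bound independent of $k$, which this example rules out. The auxiliary claim that a graded quotient module nontrivial in degree $\ell$ already contributes $\binom{\ell+d-1}{d-1}$ dimensions is likewise wrong—the same example has dimension one in each degree.

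The paper's proof argues on the image rather than the coimage, and the crucial step is a pigeonhole, not a regularity bound. Let $D\subset\Galg$ be the span of monomials of degree $<p^{k_0}$, so $\dim D\ge p^{dk_0}/d!$. For each free basis vector $v_i$ of $N$, if $Dv_i\cap\text{im}\,\overline{T}_{k_0}=\{0\}$ then $Dv_i$ injects into the coimage, contradicting the hypothesis. Hence for every $i$ there is an element $x_i\in\text{im}\,T$ of degree $<p^{k_0}$ whose leading term lies in the $i$th coordinate alone; after right-multiplying by a monomial one may take its degree to be exactly $p^{k_0}-1$. Now use that $T$ commutes with right multiplication: the elements $x_i z^\alpha$ for $\alpha_j\le p^k-p^{k_0}$ all lie in $\text{im}\,\overline{T}_k$, and almost-commutativity makes their images in the associated graded independent. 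This yields $\dim\text{im}\,\overline{T}_k\ge n(p^k-p^{k_0}+1)^d\ge(1-dp^{k_0-k})\dim N_k$, which is the desired bound. Note that the passage from graded to filtered that worried you never arises: one only needs that linear independence in the graded implies linear independence in $N_k$, which is automatic for a filtration.
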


\begin{proof}

Let $v_i$ be a free basis for $N$, and let $D \subset \F_p \llbracket G \rrbracket$ be the subspace spanned by all monomials of degree less than $p^{k_0}$.  Then $\dim D \ge p^{dk_0}/d!$, and by our assumption $\im \overline{T}_{ k_0 }$ contains an element of $D v_i$ for each $i$ which we denote $x_i$.  As $\overline{T}_{ k_0 }$ commutes with right multiplication by $\F_p \llbracket G \rrbracket$, we may assume that all $x_i$ have degree $p^{k_0}-1$.  Let $N^0 \supset N^1 \supset N^2 \ldots$ be the filtration on $N$ by degree, and $N^i_{ k }$ the corresponding filtration on $N_{k}$.  Lifting the $x_i$ to $N$, we obtain elements $\{ x_i \}$  of $\im T$ of degree $p^{k_0}-1$ such that the image of $x_i$ in $N^{p^{k_0}-1} / N^{p^{k_0}}$ is supported in the $i$th co-ordinate.  For a given $k$, define $P_d$ by

\begin{equation*}
P_d = \text{span} \{ z^\alpha | \Sigma \alpha_i = d-p^{k_0}+1, \, \alpha_i \le p^{k} - p^{k_0} \}.
\end{equation*}

Because $T$ commutes with right multiplication by $\F_p \llbracket G \rrbracket$, for each $i$ we have a map $P_d \rightarrow \im T$ given by $p \mapsto x_i p$, and by the almost commutativity of $\F_p \llbracket G \rrbracket$ we see that the map $P_d \rightarrow ( \im \overline{T}_{ k }\cap N^d_{ k } ) / N^{d+1}_{ k }$ is an injection.  Therefore

\begin{eqnarray*}
\dim \im \overline{T}_{ k } & \ge & \dim \bigcup P_d \\
\dim \im \overline{T}_{ k } & \ge & \dim N_{k } ( 1 - dp^{k_0 - k} )
\end{eqnarray*}

as required.

\end{proof}

To deduce proposition \ref{saving} from this, we know that $| \dim H^1(\Gamma, A_k) - \dim \Omega^1( \Gamma, A_k) | \le 2d p^{(d-1)k}$, so the condition of the theorem implies a bound on $\dim \Omega^1(\Gamma, A_{k-1})$ sufficient to apply lemma \ref{quotient}.  This gives us a power saving in the growth of $\Omega^1$, and hence a power saving in $H^1$ as required.\\

\subsection{Proof of Proposition \ref{analytic} }
\label{coh24}

The proof of proposition \ref{analytic} uses much the same ideas as that of proposition \ref{saving}, although we have to work a little harder because we will apply them to a group which is not uniform and hence whose group algebra is not commutative up to higher order terms (this is only for practical reasons, as noted in the introduction).  For the rest of section \ref{coh3}, $G_i$ will denote the principal congruence subgroup of $SL(2,\Z_p)$ of level $p_i$ and we set $G = G_1$.  Instead of using the near-Abelianness of $\Galg$ to provide a power saving with respect to a uniformly shrinking family of congruence subgroups, we shall shrink the group in one direction at a time and thus be able to prove power saving of the form $\dim H^1( \Gamma_k , \F_p ) \ll p^k$, which forces case (a) to hold.  The families of subgroups we will descend along are as follows:

\begin{eqnarray*}
H_k = \left\{ \left( \begin{array}{cc} 1 + pa & p^k b \\ pc & 1+pd \end{array} \right) | a, b, c, d \in \Z_p \right\},\\
P_{k,l} = \left\{ \left( \begin{array}{cc} 1 + pa & p^k b \\ p^l c & 1+pd \end{array} \right) | a, b, c, d \in \Z_p \right\},
\end{eqnarray*}

where we set $H = H_0$.  Let $\Lambda = \Gamma \cap H$, $\Lambda_k = \Gamma \cap H_k$, and $\Lambda_{k,l} = \Gamma \cap P_{k,l}$ .  We shall first prove that $\dim H^1( \Lambda_k, \F_p ) \le p-5$, then $\dim H^1( \Lambda_{k,l}, \F_p ) \le p-1$, and finally that $\dim H^1( \Gamma_k , \F_p ) \ll p^k$ by lemma \ref{pnormal}.  $H$ is not uniform, and so we shall first need some results on $\Halg$ to stand in for theorem \ref{gpal}.

\subsection{Structure of $\Halg$ }
\label{coh25}

Choose a topological generating set $\{ g_1, g_2, g_3\}$ for $H$ whose elements are upper triangular, diagonal and lower triangular respectively.  This is not a minimal generating set, but the Bruhat decomposition does ensure that every element of $H$ has a unique expression as $g = g_1^a g_2^b g_3^c$ for $p$-adic integers $a, b, c$.  This means that the completed group algebra $\Halg$ stil has the feature that if we choose generators $z_i = 1 - g_i$ as before then every element will have a unique expression as 

\begin{equation*}
x = \sum_\alpha \lambda_\alpha z^\alpha.
\end{equation*}

The derived subgroup of $H$ is generated by $g_1^p, g_2$ and $g_3^p$, and correspondingly elements of $\Halg$ commute up to terms in the ideal $D = \langle z_1^p, z_2, z_3^p \rangle$.  Moreover, because $g_1$ and $g_2$ generate a uniform subgroup, $z_1$ and $z_2$ commute up to terms which are polynomials of degree at least $p$ in $z_1$ and $z_2$, and likewise for $z_2$ and $z_3$.  As before, for $k \ge 0$ define the ideal $I_k $ of $\Halg$ corresponding to the subgroup $G_{k+1}$ by

\begin{equation*}
I_k = \langle z_1^{p^{k+1}}, z_2^{p^k}, z_3^{p^k} \rangle.
\end{equation*}

Let $N_k$ be the coset representation $\F_p[H/H_k]$ of $\Lambda$ so that $H^1( \Lambda_k, \F_p ) = H^1( \Lambda, N_k )$.  $\F_p[H/H_k]$ is the quotient of $\Halg$ by the left ideal $J + I_{k-1}$, where $J = \langle z_2, z_3 \rangle$.  $N_k$ may be thought of as quotients of $N = \Halg / J$.  We will need the following fact about $J$:

\begin{lemma}
\label{ideal}
As a $\F_p$ vector space, the right ideal $J = \text{span} \{ z^n | n_2+n_3 \ge 1 \}$.

\end{lemma}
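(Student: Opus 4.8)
The plan is to determine $J$ monomial-by-monomial, using the unique expansion $x=\sum_\alpha \lambda_\alpha z^\alpha$ supplied by the structure theory of $\Halg$. Write $W=\text{span}\{z^n \mid n_2+n_3\ge 1\}$ for the claimed answer, and prove the inclusions $W\subseteq J$ and $J\subseteq W$ separately. The inclusion $W\subseteq J$ is immediate from the ordering in $z^n=z_1^{n_1}z_2^{n_2}z_3^{n_3}$: if $n_3\ge 1$ then $z^n=(z_1^{n_1}z_2^{n_2}z_3^{n_3-1})\,z_3\in\Halg z_3$, and if $n_3=0$ but $n_2\ge 1$ then $z^n=(z_1^{n_1}z_2^{n_2-1})\,z_2\in\Halg z_2$; since $J$ is closed and $W$ is the closed span of these monomials, $W\subseteq J$.

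For the reverse inclusion it suffices, by continuity, to show $z^\alpha z_2\in W$ and $z^\alpha z_3\in W$ for every standard monomial $z^\alpha$. The case of $z_3$ is trivial, since $z^\alpha z_3=z_1^{\alpha_1}z_2^{\alpha_2}z_3^{\alpha_3+1}$ is already a standard monomial with positive $z_3$-exponent. For $z_2$, I would write $z^\alpha z_2=z_1^{\alpha_1}\,(z_2^{\alpha_2}z_3^{\alpha_3}z_2)$ and note that $y:=z_2^{\alpha_2}z_3^{\alpha_3}z_2$ lies in the closed subalgebra of $\Halg$ topologically generated by $z_2$ and $z_3$. Because $g_2$ and $g_3$ generate a uniform subgroup, this subalgebra consists exactly of the series in $z_2,z_3$ alone, i.e.\ is spanned by the monomials $z_2^iz_3^j$; applying the augmentation $\varepsilon\colon\Halg\to\F_p$ (which kills every $z_i$) gives the constant term of $y$ as $\varepsilon(y)=\varepsilon(z_2)^{\alpha_2+1}\varepsilon(z_3)^{\alpha_3}=0$, so every monomial occurring in $y$ has $i+j\ge 1$. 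Left-multiplying by $z_1^{\alpha_1}$ then produces standard monomials $z_1^{\alpha_1}z_2^iz_3^j$ with $i+j\ge 1$, so $z^\alpha z_2\in W$. Hence $\Halg z_2\subseteq W$, and $J=\Halg z_2+\Halg z_3\subseteq W$, completing the argument.

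The one point to watch is that the rearrangement of $z_2^{\alpha_2}z_3^{\alpha_3}z_2$ into standard form takes place entirely inside the group algebra of $\langle g_2,g_3\rangle$, so that no pure powers of $z_1$ are created: this is precisely why the ideal $J=\langle z_2,z_3\rangle$ occurring in $N=\Halg/J$ has this clean monomial description, whereas commuting a $z_2$ leftward past a $z_1$ would introduce $z_1$-only correction terms coming from the commutator $[g_1,g_2]$, which is a power of $g_1$. No input beyond Theorem \ref{gpal} and the uniformity of $\langle g_2,g_3\rangle$ is needed.
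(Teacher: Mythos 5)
Your proof is correct, and it differs from the paper's in a useful way. The paper establishes $J \subseteq W$ by showing $W$ is closed under \emph{left} multiplication by each generator $z_i$ (so $W$ is a left ideal containing $z_2,z_3$); for $i=3$ this forces one to commute $z_3$ past $z_1^{n_1}$ (which produces terms involving all three variables) and then to track total degree when rearranging the resulting $z_2,z_3$-block into standard form, concluding because $\beta_2+\beta_3 \ge n_2+n_3 \ge 1$. You instead keep the $z_1$-power fixed on the left and verify $z^\alpha z_2, z^\alpha z_3 \in W$ directly, reducing everything to the observation that $z_2^{\alpha_2}z_3^{\alpha_3}z_2$ lies in the closed uniform subalgebra $\F_p\llbracket\langle g_2,g_3\rangle\rrbracket$ and has vanishing augmentation, so its expansion contains no constant term. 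Both arguments rest on the same structural input (uniformity of $\langle g_2,g_3\rangle$ and the unique-monomial-expansion), but your augmentation trick sidesteps the degree bookkeeping entirely; that bookkeeping is not wasted in the paper, though, since the same degree-monotonicity reappears as essential content in Lemma \ref{abel}. You also implicitly and correctly read $J$ as the left ideal $\Halg z_2 + \Halg z_3$, which is what the quotient construction of $N_k$ requires and what both proofs actually verify, despite the lemma's phrasing of ``right ideal.''
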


\begin{proof}
Let $W$ be the vector space on the right.  As $J$ is generated by $z_2$ and $z_3$ we know that $W \subset J$, and to show equality it is enough to show that $W$ is a right ideal.  To show that $z_i z^n \in W$ for a basis monomial $z^n$ of $W$ and a generator $z_i$ of $\Halg$, express $z_i z_1^{n_1}$ in standard form so that $z_i z^n$ may be written as $\sum_\alpha \lambda_\alpha z^\alpha z_2^{n_2}z_3^{n_3}$.  As discussed, any monomial $z_2^{\alpha_2} z_3^{\alpha_3} z_2^{n_2} z_3^{n_3}$ can be expressed as $\sum_\beta \mu_\beta z_2^{\beta_2} z_3^{\beta_3}$ with $|\beta| \ge n_2 + n_3 + \alpha_2 + \alpha_3 \ge 1$, so the basis monomials expressing $z_i z^n$ are in $W$ and it is an ideal.

\end{proof}

Lemma \ref{ideal} shows that $N_k$ is naturally isomorphic to $\F_p[z_1]/( z_1^{p^k} )$ (although not as $\Halg$ modules).  The following near - Abelianness statement for the action of $\Halg$ on $N_k$ will play a similar role as theorem \ref{gpal} did in the proof of proposition \ref{saving}.

\begin{lemma}
\label{abel}
For $v = p(z_1) \in N_k$ of degree $d$ and $x \in \Halg$, $(x z_1^t) v = (z_1^t x) v + z_1^{t+d+p-2} N_k$.
\end{lemma}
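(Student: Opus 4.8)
The plan is to turn the claim into a degree count inside the filtered ring $\Halg$ and its module $N_k$, using the identification $N_k \simeq \F_p[z_1]/(z_1^{p^k})$ from Lemma \ref{ideal} together with the fact that $N_k$ is $\Halg$ modulo the left ideal $\langle z_2,z_3,z_1^{p^k}\rangle$. First I would reduce to the case where $x=z^\alpha$ is a single standard monomial and $v=z_1^d$: both sides of the asserted identity are $\F_p$-linear and continuous in $x$, so the unique expansion $x=\sum_\alpha\lambda_\alpha z^\alpha$ reduces us to monomials, and both sides are linear in $v$, so decomposing an arbitrary $v$ of degree $d$ into the monomials $z_1^e$ with $e\ge d$ reduces us to $v=z_1^d$ (each $z_1^e$ yielding a conclusion at least as strong). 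If $\alpha_2=\alpha_3=0$ then $z^\alpha=z_1^{\alpha_1}$ commutes with $z_1^t$ on the nose and the two sides coincide; hence I may assume $\alpha_2+\alpha_3\ge 1$, and I will show that each of $z^\alpha\cdot(z_1^t\cdot v)$ and $z_1^t\cdot(z^\alpha\cdot v)$ lies in $z_1^{\,t+d+p-2}N_k$, which gives the lemma on subtracting.

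The engine is a single-generator estimate: writing $\varphi\colon\Halg\to N_k$ for the projection, I claim that for $i\in\{2,3\}$ and $w\in z_1^m N_k$ one has $z_i\cdot w\in z_1^{\,m+p-1}N_k$; by linearity it suffices to take $w=z_1^m$. Since $z_1^m z_i$ lies in the left ideal $\langle z_2,z_3\rangle$, hence in the defining ideal of $N_k$, we get $\varphi(z_i z_1^m)=\varphi(z_i z_1^m-z_1^m z_i)$, and the telescoping identity
\begin{equation*}
z_i z_1^m-z_1^m z_i=\sum_{l=0}^{m-1}z_1^{\,l}\,(z_i z_1-z_1 z_i)\,z_1^{\,m-1-l}
\end{equation*}
exhibits $z_i z_1^m-z_1^m z_i$ as a sum of standard monomials all of degree $\ge m+p-1$, because the commutator $z_i z_1-z_1 z_i$ has degree $\ge p$. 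For the pairs $\{z_1,z_2\}$ and $\{z_2,z_3\}$ this is exactly the commutation fact recorded in Section \ref{coh25}; for $\{z_1,z_3\}$ only membership in $D=\langle z_1^p,z_2,z_3^p\rangle$ is stated, so one should note in addition that $z_1 z_3-z_3 z_1$ also has degree $\ge p$ — either directly, since $[g_1,g_3]$ lies deep in $H$, or by observing that the degree-one generator $z_2$ of $D$ is itself pushed up by a further $p-1$ upon reduction in $N_k$, so that the residual bound is not degraded. Reducing the monomials of $z_i z_1^m-z_1^m z_i$ that still involve $z_2$ or $z_3$ modulo the left ideal only raises degrees, this being passage to the quotient filtration; hence $\varphi(z_i z_1^m)\in z_1^{\,m+p-1}N_k$ as claimed.

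Granting the estimate I would iterate. Starting from $v=z_1^d\in z_1^d N_k$ and reading $z^\alpha=z_1^{\alpha_1}z_2^{\alpha_2}z_3^{\alpha_3}$ from the inside out, the $\alpha_3$ factors of $z_3$ move the support into degrees $\ge d+\alpha_3(p-1)$, the $\alpha_2$ factors of $z_2$ then move it to $\ge d+(\alpha_2+\alpha_3)(p-1)$, and the outermost $z_1^{\alpha_1}$, which just multiplies by $z_1^{\alpha_1}$ in $N_k$, gives $z^\alpha\cdot v\in z_1^{\,d+\alpha_1+(\alpha_2+\alpha_3)(p-1)}N_k$. Since $\alpha_2+\alpha_3\ge 1$, the exponent equals $d+|\alpha|+(\alpha_2+\alpha_3)(p-2)\ge d+p-2$. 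Replacing $v$ by $z_1^t\cdot v\in z_1^{\,t+d}N_k$ gives $z^\alpha\cdot(z_1^t\cdot v)\in z_1^{\,t+d+p-2}N_k$; and since $z^\alpha\cdot v\in z_1^{\,d+p-2}N_k$ already, applying $z_1^t$ gives $z_1^t\cdot(z^\alpha\cdot v)\in z_1^{\,t+d+p-2}N_k$ as well. Subtracting proves the lemma; the case $t+d+p-2\ge p^k$ is vacuous since the target subspace is then zero and both sides vanish.

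The step I expect to cost the most care is the degree bookkeeping in the second and third paragraphs: in particular, justifying that every commutator $z_i z_j-z_j z_i$ has degree $\ge p$ — the pair $\{z_1,z_3\}$ being the one not covered by the statements in Section \ref{coh25} — and tracking the constants so that the exponent comes out as exactly $t+d+p-2$ rather than something weaker. The auxiliary point that reduction modulo the left ideal never lowers degree is immediate once one names the quotient filtration, but it underlies the whole argument and should not be skipped.
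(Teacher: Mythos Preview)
Your overall reduction (to monomials $x=z^\alpha$, to $v=z_1^d$, splitting off the pure-$z_1$ case) is fine and in the same spirit as the paper's. The gap is in the ``engine'' for $i=3$: the assertion that $z_3z_1-z_1z_3$ has degree $\ge p$ is false, and neither of your proposed fixes goes through as written. Concretely, with $g_1$ upper unipotent and $g_3$ lower unipotent in $H$, one computes $[g_1,g_3]\equiv g_2\,g_1^{-p}$ modulo the next congruence level, so $1-[g_1,g_3]$ has a genuine $z_2$-component; in $\Halg$ this gives $[z_1,z_3]=z_2+(\text{terms of degree}\ge 2)$, hence degree $1$, not $\ge p$. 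Thus fix (a) is simply wrong. Fix (b) is the right idea, but the bookkeeping does not give $m+p-1$: the leading contribution to $[z_3,z_1^m]$ is $-\sum_l z_1^l z_2 z_1^{m-1-l}$, and applying your $i=2$ estimate gives $z_1^l\cdot\varphi(z_2 z_1^{m-1-l})\in z_1^{m+p-2}N_k$, one short of what you claim. More seriously, the higher-order part of $[z_3,z_1]$ is a general element of $D=\langle z_1^p,z_2,z_3^p\rangle$ and may itself contain $z_3$'s that must again be commuted past powers of $z_1$, producing further $z_2$'s; you have not argued that this recursion closes up with the needed bound, and your appeal to ``passage to the quotient filtration'' does not address it, since the degree can \emph{drop} (not rise) each time you commute a $z_3$ past a $z_1$.

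The paper deals with exactly this phenomenon by introducing a weighted degree $f(\text{monomial})=\text{total degree}+(p-2)\cdot(\text{degree in }z_2)$, observing that $f$ is monotone non-decreasing under \emph{all} the commutation relations in $\Halg$---including the $z_1,z_3$ one, since the lost unit of total degree is more than compensated by the gain of one in $z_2$-degree. One then shows, for any basis monomial $x\in J$, that $z_1^a x z_1^b\in J+z_1^{a+b+p-2}\Halg$ by tracking $f$; projecting to $N_k$ gives the lemma. Your argument would be repaired by replacing total degree with this weighted $f$ throughout the engine step; with that change the recursion you were worried about is automatically controlled, and the exponent you actually obtain is $t+d+p-2$, matching the statement.
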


\begin{proof}
As $z_1^t x ( v + (J+I_{k-1}) ) = z_1^t x v + (J+I_{k-1})$ and $x z_1^t ( v + (J+I_{k-1}) ) = x z_1^t v + (J+I_{k-1})$, it suffices to show that $[x, z_1^t] z_1^d \in J + z_1^{t+d+p-2} \Halg$.  At this is linear in $x$ and clearly satisfied for $x = z_1^n$, we only need to show that $z_1^a x z_1^b \in J + z_1^{a+b+p-2} \Halg$ for $x \in J$ a basis monomial.  Define the function $f$ on all monomials (i.e. not necessarily in the standard $z_1^{\alpha_1} z_2^{\alpha_2} z_3^{\alpha_3}$ form) as the total degree plus $p-2$ times the degree of $z_2$.  Then our discussion of the structure of $\Halg$ implies that when a monomial is redueced to standard form under the commutation relations, $f$ grows monotonically on the terms produced.  Consequently, the following two kinds of monomials will lie in $J + z_1^{a+b+p-2} \Halg$ when reduced to standard form:

\begin{list}{ \alph{qcounter}) }{ \usecounter{qcounter} }

\item Terms of degree $\ge a + b$ in which $z_2$ occurs at least once.

\item Terms of degree $\ge a + b + p-2$.

\end{list}

The only other possibility is if $x$ is of the form $z_1^{n_1} z_3^{n_3}$, but all nontrivial terms generated when a $z_3$ and $z_1$ are commuted past each other will be of one of the two types above and so also be reduced to lie in $J + z_1^{a+b+p-2} \Halg$.

\end{proof}

\subsection{Conclusion of Proof}
\label{coh26}

We may now proceed as before, by choosing a generating set $\{ g_1, \ldots g_n \}$ for $\Lambda$ in which all elements other than $g_1$ lie in $G_1$.  Then $g_i^{-1}(g_i - 1)^2 \in J + I_0 \subset \Halg$ for $i > 1$, and $g_1^{-1}(g_1 - 1)^2 = z_1^2 + z_1^3 \Halg + J$.  If we choose a resolution of $\Z$ as before, the action of $\Delta^0$ on $C^0 \simeq N_k$ will be by multiplication by something of the form $-z_1^2 + z_1^3 \Halg + J$, and by lemma \ref{abel} this implies

\begin{equation*}
\Delta^0( z_1^d + J + I_{k-1} ) = -z_1^{d+2} + z_1^{d+3} \Halg + J.
\end{equation*}

Therefore $\dim \Delta^0 \le 2$ for all $k$, and lemma \ref{approx} gives us that $| \dim H^1( \Lambda, N_k ) - \dim \Omega^1( \Lambda, N_k) | \le 2$.  Combined with the assumption of the proposition, we have $\dim \Omega^1( \Lambda, N_1) \le p-7$.  We now give an analog of lemma \ref{quotient} which proves that $\dim \Omega^1( \Lambda, N_k) \le p-7$ for all $k$ based on this assumption.

\begin{lemma}
If $\dim \Omega^1( \Lambda, N_1) = r \le p-3$, then $\dim \Omega^1( \Lambda, N_k) \le r$ for all $k$
\end{lemma}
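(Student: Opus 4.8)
The plan is to mimic the proof of Lemma~\ref{quotient}, with the uniform-group structure theorem replaced by the almost-commutativity statement of Lemma~\ref{abel}, which is exactly the substitute for Theorem~\ref{gpal} that we isolated for this non-uniform setting. Recall that $\Omega^1(\Lambda, N_k)$ is the kernel of a map between free $\Halg$-modules reduced mod $J+I_{k-1}$, and since the finite quotients of $\Halg$ are group algebras and hence self-dual, bounding $\dim\Omega^1(\Lambda,N_k)$ is equivalent to bounding the codimension of the image of the adjoint map $\overline{T}_k^{+}: N_k^{\oplus b} \to N_k^{\oplus a}$ (transpose the matrix, invert the group elements in its entries). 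So it suffices to show that if the image of $\overline{T}_1^{+}$ has codimension $r \le p-3$ in the $r$-dimensional target of that stage --- wait, more precisely: if $\dim\operatorname{coim}\overline{T}_1^{+} = r \le p-3$, then $\dim\operatorname{coim}\overline{T}_k^{+}\le r$ for all $k$.

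First I would use the identification $N_k \simeq \F_p[z_1]/(z_1^{p^k})$ from Lemma~\ref{ideal}, so that everything reduces to tracking degrees in the single variable $z_1$. Since $\dim N_1 = p$ and the coimage has dimension $r\le p-3$, the image of $\overline{T}_1^{+}$ contains, in each of the $a$ output coordinates, an element whose lowest-degree term in $z_1$ has degree at most $r \le p-3$; call its degree $d_i \le p-3$ for coordinate $i$. Because $\overline{T}_1^{+}$ commutes with right multiplication by $\Halg$ (equivalently, by $z_1^t$ acting on $N_1$), I may assume after multiplying by a power of $z_1$ that each such element has its leading term in degree exactly $r$ or, more conveniently, I track the element $x_i\in\im T^{+}$ lifted to $N$ whose image in $N^{d_i}/N^{d_i+1}$ is supported in the $i$th coordinate. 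Now for general $k$, right multiplication of $x_i$ by $z_1^t$ for $t = 0,1,\dots$ produces, via Lemma~\ref{abel}, elements of $\im\overline{T}_k^{+}$ whose leading terms sit in degrees $d_i, d_i+1, \dots$ up to degree $p^k - 1$, with a controlled error of the form $z_1^{\,d_i+t+p-2}N_k + J$. The key point is that as long as $d_i \le p-3$, i.e. $d_i + p - 2 \le 2p - 5 < $ the exponent by which the error is pushed forward, the leading terms of the $x_i z_1^t$ remain linearly independent modulo higher degree, so that for each output coordinate $i$ we capture all degrees $\ge d_i$; since $d_i \le r$ this means the image misses at most the $r$ lowest-degree monomials in each coordinate, giving $\dim\operatorname{coim}\overline{T}_k^{+}\le r$ (indeed the coimage degrees are controlled by the $d_i$, whose maximum is at most $r$). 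Passing back through self-duality and Lemma~\ref{approx} (with $\dim\Delta^0 \le 2$) recovers the statement for $H^1$.

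The main obstacle is the bookkeeping around the error term $z_1^{t+d+p-2}\Halg + J$ in Lemma~\ref{abel}: one must check that this error never interferes with the leading terms being produced, which is precisely why the hypothesis $r \le p-3$ (rather than something weaker) is needed --- the gap of $p-2$ between a term of degree $\le r$ and the first error must exceed $r$, and the strict inequality $r \le p-3$ gives $r < p-2$. A secondary subtlety is that $\Lambda$ is not uniform and $\Halg$ is only commutative modulo the ideal $D = \langle z_1^p, z_2, z_3\rangle$, so all the commutation estimates must be carried out with the function $f$ (total degree plus $(p-2)\times$ the $z_2$-degree) from the proof of Lemma~\ref{abel}, tracking both that $z_2$ eventually lands in $J$ and that the $z_1$-degree grows as claimed; but this is already packaged inside Lemma~\ref{abel}, so in the write-up I would invoke it as a black box and concentrate on the degree count.
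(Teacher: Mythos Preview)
Your overall strategy---mimic Lemma \ref{quotient} by dualising, finding low-degree witnesses in the image, and propagating them by right multiplication using the near-commutativity of Lemma \ref{abel}---is in the right spirit, but the argument as written does not yield the bound $r$. The gap is in the step ``for each output coordinate $i$ we capture all degrees $\ge d_i$; since $d_i \le r$ this means \ldots\ $\dim\operatorname{coim}\overline{T}_k^{+}\le r$''. Finding, for each of the $a$ output coordinates separately, an element of degree $d_i \le r$ only gives $\dim\operatorname{coim}\overline{T}_k^{+}\le \sum_i d_i$, and without further work the best you know is $d_i \le r$ for each $i$, hence $\sum_i d_i \le ar$. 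A simple example: with $a=2$ over $\F_p[z_1]/(z_1^p)$, the image generated by $e_1+e_2$ and $z_1 e_2$ has codimension $r=1$, but the minimal degree of an element supported purely in either coordinate is $1$, so your per-coordinate method records $d_1=d_2=1$. To get $\sum d_i = r$ you would have to choose the $x_i$ in a triangularised basis, which amounts to putting the graded matrix into Smith normal form---and at that point you have essentially rediscovered the paper's proof.

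The paper avoids the dualisation entirely and works directly with the kernel of $T=\partial\oplus\delta$. The key observation is that, by Lemma \ref{abel}, $T$ preserves the $z_1$-degree filtration and the induced maps on each window $z_1^d L / z_1^{d+p-2} L \to z_1^d M / z_1^{d+p-2} M$ are all given by a \emph{single} matrix $A \in \text{Mat}(\F_p[z_1]/(z_1^{p-2}))$, independent of $d$. One then puts $A$ into Smith normal form; the hypothesis $\dim\ker T_1|_{z_1^2 L_1} \le r \le p-3$ forces every diagonal entry of $A$ to have degree $\le p-3$, so $\ker A$ is concentrated in the top degree of each window. Chasing this through the filtration pushes any element of $\ker T_k$ into the top window $z_1^{p^k-p+2}L_k$, where it lands inside $\ker A$ itself, of dimension $\le r$. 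This gives the sharp bound in one stroke and sidesteps the bookkeeping about whether $x_i z_1^t$ is genuinely in the image (which in your approach requires care, since $T$ only \emph{approximately} commutes with right $z_1$-multiplication). Your final sentence about passing to $H^1$ via Lemma \ref{approx} is also misplaced: the lemma being proved is already about $\Omega^1$, not $H^1$.
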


\begin{proof}

Let $T = \partial \oplus \delta: L \rightarrow M$, where $L = C^1(\Lambda, N)$ and $M = C^0(\Lambda, N) \oplus C^2(\Lambda, N)$ are isomorphic to $N^m$ and $N^n$ for some $m$ and $n$.  Denote the quotient modules $C^1(\Lambda, N_k)$ and $C^0(\Lambda, N_k) \oplus C^2(\Lambda, N_k)$ by $L_k$ and $M_k$, and the induced map between them by $T_k$.  $L$ and $M$ are free $\F_p[z_1]$ modules, and while $T$ does not preserve this structure by lemma \ref{abel} it does preserve the filtration by degree and commutes with the action of $\F_p[z_1]$ up to terms of degree $p-2$ higher; in other words, the induced maps on the subquotients 

\begin{equation}
\label{subquot}
\overline{T}_d : z_1^d L / z_1^{d+p-2} L \rightarrow z_1^d M / z_1^{d+p-2} M
\end{equation}

 preserve the $\F_p[z_1]$ structure.  If we choose free bases $\{ u_1, \ldots, u_m \}$ and $\{ v_1, \ldots, v_n \}$ for $L$ and $M$, $\overline{T}_d$ is given by a fixed matrix $A$ in $\text{Mat} ( \F_p[z_1] / (z_1^{p-2}) )$ and by altering our bases suitably we may assume that $A$ is in reduced form, i.e. that it is diagonal with entries of increasing degree.  When $d=2$, the map (\ref{subquot}) gives the action of $T_1$ on the submodule $z_1^2 L_1$ of $L_1$, and by assumption the kernel of this operator has dimension at most $ r \le p-3$.  This implies that $\dim \ker A \le r$, and in particular all the diagonal entries of $A$ must have degree at most $p-3$ so that the kernel of (\ref{subquot}) must be contained in $z_1^{d+1} L / z_1^{d+p-2} L$.  If $w \in L_k$ is in the kernel of $T_k$, considering this with $d$ the degree of $w$ shows that $d \ge p^k - p + 3$ and hence $\ker T_k \subset z_1^{p^k-p+3} L_k$.  This kernel naturally lies inside the kernel of $\overline{T}_{p^k - p + 2} \simeq A$, and so is bounded above by $\dim \Omega^1(\Lambda, N_1) = r$.

\end{proof}

Passing back to cohomology, we have $\dim H^1( \Lambda_k, \F_p) \le p-5$ for all $k$.  Now for fixed $k$ consider the lattice $\Lambda_{k,0}$ and its family $\Lambda_{k,l}$ of subgroups.  $P_{k,l}$ is conjugate to $H_{k+l-1}$ in $\text{GL}(2,\Z_p)$ and $P_{k,l}$ is even uniform for $k+l \ge 2$, so we may work exactly as before using the input $\dim H^1( \Lambda_{k,1}, \F_p) \le p-5$ to show $\dim H^1( \Lambda_{k,l}, \F_p) \le p-1$ for all $k$ and $l$.  Finally, lemma \ref{pnormal} allows us to pass from $\dim H^1( \Lambda_{k,k}, \F_p) \ll 1$ to $\dim H^1( \Gamma_k, \F_p ) \ll p^k$, which completes the proof.  Note that if $\Gamma$ is congruence we may use elements of its commensurator to `round out' the lattices $\Lambda_{2k}$ to ones which are close to $\Lambda_{k, k}$ (i.e. so that their intersection remains of bounded index in both), and so derive the bound $\dim H^1( \Lambda_{k,k}, \F_p) \ll 1$ by lemma \ref{pnormal}.  This allows us to dispense with this second descent argument and prove the theorem using only an initial bound of $\dim H^1( \Gamma_1, \F_p ) \le p-5$.

\section{Power Savings in the Weight Aspect}
\label{coh3}

We will now prove proposition \ref{vanishing} and theorem \ref{weight}, which restrict the allowable weights of cohomological automorphic forms and provide a conditional upper bound for their multiplicities.  Throughout, $A$ and $F$ will be used to denote the trace field and quaternion algebra of the stably arithmetic lattice under consideration.  We begin with the following lemma, which allows us to change between representations of $\Gamma$ defined at different infinite or finite places of the trace field.

\begin{lemma}
\label{placechange}
Let $A/F$ be a quaternion algebra, $K/F$ an extension over which $A$ splits, and $\sigma: K \rightarrow K_w$ an embedding of $K$.  Then the module for $A$ over $K_w$ defined by the corresponding embedding

\begin{equation*}
A \longrightarrow A \otimes K \simeq M_2(K) \overset {\sigma} {\longrightarrow} M_2(K_w)
\end{equation*}

depends only on the embedding $\sigma_0: F \rightarrow F_v$ of $F$ below $\sigma$, and is isomorphic to the two dimensional module given by the embedding $A \longrightarrow A_v$.
\end{lemma}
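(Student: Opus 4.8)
The plan is to reduce everything to a statement about $A \otimes_F F_v$ and use the fact that a quaternion algebra over a local or global field is determined by its ramification. First I would observe that the composite map in the statement factors through $A \otimes_F K_w$: indeed $A \otimes_F K \simeq M_2(K)$ followed by $\sigma: M_2(K) \to M_2(K_w)$ is the same as the base change $A \otimes_F K \to A \otimes_F K_w$ (induced by $\sigma$ on the $K$-factor) composed with the splitting isomorphism $A \otimes_F K_w \simeq M_2(K_w)$. So the two-dimensional $K_w$-module we are handed is just the standard module of $M_2(K_w)$ pulled back along $A \hookrightarrow A \otimes_F K_w$, and the content of the lemma is that this pullback depends only on $F_v = \sigma_0(F)$ and agrees with the module coming from $A \hookrightarrow A \otimes_F F_v$.

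The key step is that $\sigma: K \to K_w$ restricts to $\sigma_0: F \to F_v$, so $K_w$ is naturally a $F_v$-algebra and $A \otimes_F K_w = (A \otimes_F F_v) \otimes_{F_v} K_w$. Now $A_v := A \otimes_F F_v$ is a quaternion algebra over $F_v$ which, by hypothesis, is split by $K_w$ (since $A$ is already split by $K$, hence by $K_w$); in particular $A_v$ is itself split, i.e. $A_v \simeq M_2(F_v)$. Fixing such an isomorphism gives the two-dimensional $F_v$-module $V_v$ attached to $A \to A_v$, and then $V_v \otimes_{F_v} K_w$ is a two-dimensional $K_w$-module for $A \otimes_F K_w \simeq M_2(F_v) \otimes_{F_v} K_w \simeq M_2(K_w)$ which is visibly the standard module. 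Since any two splittings $A_v \simeq M_2(F_v)$ differ by an inner automorphism (Skolem--Noether), $V_v$ is well-defined up to isomorphism as an $A$-module, and the displayed $K_w$-module in the lemma is $V_v \otimes_{F_v} K_w$; this exhibits both the independence from $\sigma$ (only $\sigma_0$, equivalently $F_v$, enters) and the asserted isomorphism with the module from $A \to A_v$. Finally, the statement that the $K_w$-module is ``isomorphic to'' the $F_v$-module means after the obvious extension of scalars $F_v \to K_w$; if one instead wants an honest comparison one restricts scalars and notes $[K_w : F_v]$ copies appear, but for our applications (where $K_w$ and $F_v$ are both Archimedean completions, or both finite completions at split primes) the intended reading is the extension-of-scalars one.

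I do not expect a serious obstacle here; the lemma is essentially bookkeeping about base change of central simple algebras, and the only points requiring care are (i) checking that the composite $M_2(K) \xrightarrow{\sigma} M_2(K_w)$ really is the $K_w$-linear extension of the $F$-algebra map $A \to M_2(K)$, so that no information beyond $\sigma_0$ survives, and (ii) invoking Skolem--Noether to see that the splitting of $A_v$ is irrelevant up to isomorphism. The mild subtlety worth flagging in the write-up is the precise sense of ``isomorphic'' across the field extension $F_v \hookrightarrow K_w$, which I would state explicitly to avoid ambiguity.
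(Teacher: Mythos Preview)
Your overall strategy---factor the map through $A_v = A \otimes_F F_v$ and then through $A_v \otimes_{F_v} K_w$---is exactly the paper's approach, and most of your bookkeeping is correct. However, there is a genuine error in the second paragraph: from ``$A_v$ is split by $K_w$'' you conclude ``in particular $A_v$ is itself split, i.e.\ $A_v \simeq M_2(F_v)$.'' This implication fails; the Hamilton quaternions over $\R$ are split by $\C$ but are certainly not split over $\R$. The case genuinely arises here: in Proposition~\ref{vanishing} the lemma is applied at \emph{all} archimedean places of $F$, including the $r_0$ real places at which $A$ is ramified, so your module $V_v$ over $F_v$ need not exist and the rest of your argument (which is built on $V_v \otimes_{F_v} K_w$) collapses in that case.

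The fix is to stop short of descending to $F_v$, which is what the paper does. One always has $A_v \otimes_{F_v} K_w \simeq M_2(K_w)$ (this uses only that $K_w$ splits $A_v$, which you correctly noted), so the composite $A \to A_v \to A_v \otimes_{F_v} K_w \simeq M_2(K_w)$ makes sense unconditionally and hands $K_w^2$ an $A$-module structure. Uniqueness is then the statement that a two-dimensional irreducible $K_w$-module for $A_v$ on which $F_v$ acts through $\sigma_0$ is unique up to isomorphism: any such module is automatically a module for $A_v \otimes_{F_v} K_w \simeq M_2(K_w)$, and $M_2(K_w)$ has only one two-dimensional irreducible. This gives both the independence from $\sigma$ and the asserted isomorphism, without ever needing $A_v$ to be split over $F_v$. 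Your Skolem--Noether remark is then superfluous (though harmless in the split case), and your worry about the meaning of ``isomorphic'' across $F_v \hookrightarrow K_w$ evaporates, since both modules in the comparison live over $K_w$ from the start.
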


\begin{proof}
The above map $A \longrightarrow M_2(K_w)$ factors as $A \longrightarrow A_v \longrightarrow A_v \otimes_{F_v} K_w$, and so the two dimensional irreducible module for $M_2(K_w)$ restricts to a two dimensional irreducible module for $A_v$ in which $F_v$ acts by the embedding $\sigma_0$, which is unique.
\end{proof}

\subsection{Proof of Proposition \ref{vanishing}}
\label{coh31}

To prove proposition \ref{vanishing}, we choose an extension $L$ over which $A$ splits and embeddings $\widetilde{\sigma}_1, \ldots, \widetilde{\sigma}_n$ of $L$ which extend the archimedean embeddings $\sigma_1, \ldots, \sigma_n$ of $F$.  With notation as in the introduction, by lemma \ref{placechange} $\rho_d$ may be expressed as

\begin{equation*}
\rho_d = \bigotimes_{i=1}^n \Sym^{d_i} \circ \widetilde{\sigma}_i
\end{equation*}

Let $\widetilde{L}$ be the Galois closure of $L$.  We may calculate $H^i(\Gamma, \rho_d)$ using a resolution of $\widetilde{L}$ as a $\widetilde{L}[\Gamma]$ module, and so by acting on cochains an element $g$ of $\text{Gal}(\widetilde{L})$ induces an isomorphism between the cohomology groups $H^i(\Gamma, \rho_d)$ and $H^i(\Gamma, g(\rho_d) )$.  Here $g(\rho_d)$ is the representation given by applying $g$ to the matrices of $\rho_d$, and is equal to

\begin{eqnarray*}
g(\rho_d) & = & \bigotimes_{i=1}^n \Sym^{d_i} \circ \widetilde{\sigma}_i \circ g \\
& = & \bigotimes_{i=1}^n \Sym^{d_{g(i)} } \circ \widetilde{\sigma}_i,
\end{eqnarray*}

where $g(i)$ is the automorphism of the archimedean embeddings of $F$ associated to $g$.  The action of $g$ commutes with the restriction maps to the cohomology of parabolic subgroups of $\Gamma$, and so also gives an isomorphism between the cuspidal subspaces.  Therefore if $H^i_{\text{cusp}} ( \Gamma, \rho_d ) \neq 0$, by applying the weight condition of Borel and Wallach to $\rho_d \circ g$ we see that $d_{g(i)}$ and $d_{g(j)}$ must be equal whenever $\sigma_i$ and $\sigma_j$ are a pair of conjugate complex embeddings.  Let $K$ be the Galois closure of $F$ with Galois group $G$, and $H \subset G$ the stabiliser of $F$.  If $\sigma$ is an embedding of $K$, the embeddings of $F$ are given by $\sigma \circ g_i$ for $g_i \in G / H$ a set of coset representatives, and the weight $d$ is an element of $\Z[G / H]$.  If $\tau \in G$ is complex conjugation with respect to $\iota$ then the condition that $d_i$ and $d_j$ be equal for $\sigma_i = \overline{\sigma}_j$ is equivalent to saying that $d$ is invariant under left multiplication by $\tau$, and the condition that $d_{g(i)}$ and $d_{g(j)}$ be equal for all $g$ is equivalent to saying that $d$ is invariant under the normal closure $N$ of $\tau$.  As the fixed field of $N$ is the maximum totally real subfield of $K$, this implies that the fixed field of $HN$ is the maximum totally real subfield $F_0$ of $F$.  As $d$ is right invariant under $HN$, this implies that that value of $d_i$ is equal at all embeddings above a given embedding of $F_0$.

\subsection{Proof of Theorem \ref{weight} }
\label{coh32}

We shall now prove theorem \ref{weight}.  Recall that our approach will be to use the arithmetic nature of $\Gamma$ to define the representations $\rho_d$ over $\Q_p$ rather than $\C$.  This will allow us to reduce them modulo $p$ and bound $H^1(\Gamma, \rho_d)$ in terms of cohomology with $\F_p$ coefficients in the level aspect, which we can control using the tools of section \ref{coh2}.

Let $K$ be a Galois extension which splits $A$, $G = \text{Gal}(K/\Q)$ and $H$ be the stabiliser of $F$ in $G$.  Let $\sigma$ be a complex embedding of $K$ extending the canonical one of $F$, so that the other archimedean embeddings of $F$ are given by $\sigma \circ g_i$ for $g_i \in G/H$ a set of coset representatives.  As in the proof of theorem \ref{vanishing} we may express $\rho_d$ over $K$ by the diagram

\begin{equation*}
A^\times \longrightarrow GL(2,K) \overset {\bigoplus g_i } {\longrightarrow} \bigoplus GL(2,K) \longrightarrow GL(N,K),
\end{equation*}

where the last arrow is given by the tensor product of the $d_i$th symmetric power homomorphisms.  Composing this map with $\sigma$ would give the usual realisation of $\rho_d$ over $\C$, and by choosing instead a totally split $p$-adic place $v$ of $K$ we obtain a representation $\rho_v : A^\times \rightarrow GL(N,\Q_p)$.  In this case the collection $\{ v_i = v \circ g_i \}$ is a complete set of places of $F$ above $p$, and by lemma \ref{placechange} $\rho_v$ has an alternative expression via the direct sum of the embeddings of $A$ into its completions $A_{v_i}$ as

\begin{equation*}
A^\times \longrightarrow \bigoplus A_{v_i}^\times \simeq GL(2,\Q_p)^n \longrightarrow GL(N,\Q_p).
\end{equation*}

We have

\begin{equation*}
\dim_\C H^1(\Gamma, \rho_d) = \dim_{\Q_p} H^1( \Gamma, \rho_v ).
\end{equation*}

\subsection{Reduction to Positive Characteristic}
\label{coh33}

Let $G_i$ be the standard maximal compact subgroups of $A_{v_i}^\times$, and let $G_i(p)$ be their congruence subgroups of first full level.  For simplicity, we shall now assume that $\Gamma(p)$ is dense in $\prod G_i(p)$.  If not, we may carry out the proof below with minor modifications after passing to $\Gamma(p^t)$ for which density holds.  We will still have the power saving hypothesis by lemma \ref{pnormal}, and will restrict $\rho_v$ to $\Gamma(p^t)$ and take $L$ to be the polynomials which are integrally valued on $G(p^t)$.

From now on, we shall think of $\rho_v$ as a representation of $\Gamma(p)$ by restriction.  As discussed, we shall find a lattice $L \subset \rho_v$ such that $L / p L$ factors through a suitable congruence subgroup of $\Gamma(p)$.  It suffices to find lattices $L_i$ in the representations $\Sym^{d_i}$ of $G_i(p)$, as we may obtain $L$ from these by tensoring and restricting to $\Gamma(p)$.  As $G_i$ are all isomorphic we shall denote them by $G$, with standard congruence subgroups by $G(p^k), G_0(p^k)$ etc.

We shall realise the representation $\Sym^d$ of $G(p)$ on the space of homogeneous polynomials $p: \Z_p^2 \rightarrow \Q_p$ of degree $d$ (strictly speaking this is the symmetric power of the dual of the standard representaton, but the standard representation is self-dual), with the action given by $(g \circ f)(x) = f( g^{-1}x )$.  There is an intertwining map from this space to the left regular representation of $SL_2(\Z_p)$ on $C( SL_2(\Z_p), \Q_p)$ given by

\begin{equation*}
p \mapsto ( g \mapsto f(g v_1))
\end{equation*}

where $v_1$ is the first standard basis vector, whose image is the space of functions

\begin{equation*}
f: \left( \begin{array}{cc} a & b \\ c & d \end{array} \right) \mapsto p(a,c)
\end{equation*}

for $p$ a homogeneous polynomial of degree $d$.  All such functions are invariant from the right under the upper triangular subgroup and transform under the diagonal according to

\begin{equation*}
f\left[ g \left( \begin{array}{cc} a & 0 \\ 0 & a^{-1} \end{array} \right) \right] = a^df(g),
\end{equation*}

and so the image of our intertwining map is actually contained inside a representation induced from a character of the Borel subgroup.  We shall choose a lattice in $\Sym^d$ consisting of those polynomials whose values on $G(p)$ are integral, which we denote by $L$.  The reduced lattice $L/pL$ is then seen to be isomorphic to the space of $\F_p$ valued functions on $G(p)$ given by the reduction of integral valued polynomials in $a$ and $c$, and Lucas' theorem shows that such functions must be invariant under $G(p^{k+1})$ for $k$ satisfying $p^k \ge d$.

The invariance under $G(p^{k+1})$, combined with the behaviour under right translation by the Borel, shows that $L/pL$ may be realised as a submodule of the right coset representation $\F_p [ G(p) / ( G(p) \cap G_0(p^{k+2}) ) ]$.  Choose lattices $L_i$ inside the representations $\Sym^{d_i} \circ v_i$ in this way.  Forming the lattice $L \subset \rho_v$ as their tensor product, we see that $L/pL$ may be realised as a subspace of the representation of $\Gamma(p)$ on the module

\begin{eqnarray*}
V & = & \bigotimes \F_p [ G(p) / ( G(p) \cap G_0(p^{k_i+2}) ) ] \\
 & = & \F_p [ \Gamma(p) / ( \Gamma(p) \cap \Gamma_0(\Pp) ) ],
\end{eqnarray*}

where $\Pp = \prod_{i=1}^n \p_i^{k_i+2}$.  Note that we have used the assumption of $\Gamma(p)$ being dense in $\prod G_i(p)$ here.  As

\begin{equation*}
\dim_{\Q_p} H^1( \Gamma(p), \rho_v ) \le \dim_{\F_p} H^1( \Gamma(p), L/p L ),
\end{equation*}

we should be able to get from this a bound of the form

\begin{eqnarray*}
\dim_{\Q_p} H^1( \Gamma(p), \rho_v ) & \le & \dim_{\F_p} H^1( \Gamma(p), V )\\
& = & \dim_{\F_p} H^1( \Gamma(p) \cap \Gamma_0(\Pp), \F_p ).
\end{eqnarray*}

However, to do this we either need to control $H^0$ of the quotient of these two representations or pass to $\dim \Omega^1$ (for which such an inequality clearly holds on passing to subrepresentations) as an intermediate step.  We shall do the latter, using the methods of section 1.  If we let $K = \sum k_i$ and $|k| = \max k_i$, it can be seen that after making the appropriate choice of generators for $\Gamma(p)$ we have a bound 

\begin{equation*}
\dim \Delta^0( \Gamma(p), L/p L ) \le \dim \Delta^0( \Gamma(p), V ) \ll p^{K - |k|},
\end{equation*}

and hence an inequality of the required type,

\begin{equation*}
\dim_{\Q_p} H^1( \Gamma(p), \rho_v ) \le  \dim_{\F_p} H^1( \Gamma(p) \cap \Gamma_0(\Pp), \F_p ) + O( p^{K - |k|} ).
\end{equation*}

As $p^K \sim \prod d_i$, the theorem will then follow from an estimate of the form

\begin{equation}
\dim H^1( \Gamma(p) \cap \Gamma_0(\Pp), \F_p ) \ll p^{(1-\delta)K}.
\end{equation}

Note that it suffices to prove bounds of the form

\begin{equation}
\label{gamma0}
\dim H^1( \Gamma(p) \cap \Gamma_0(\p_i^k), \F_p ) \ll p^{(1-\delta)k}
\end{equation}

for each prime $\p_i$, as we may deduce the more general statement from this by taking $\p_i$ to be the prime for which $k_i$ is largest and applying lemma \ref{pnormal}.

\subsection{Power Savings in Skewed Towers}
\label{coh34}

This section is devoted to the proof of (\ref{gamma0}), which proceeds deriving a power saving in the skewed tower $\Gamma(p) \cap \Gamma_0(\p_i)$ from the saving

\begin{equation}
\label{roundtower}
\dim H^1( \Gamma(p) \cap \Gamma(\p_i^k), \F_p) \ll p^{2k}
\end{equation}

in a uniform tower given by the hypotheses of the theorem and the theorem of Calegari and Emerton.  We assume that $\Gamma = \Gamma(p)$ and setting $\p = \p_i$, and let $G_i$ be the principal congruence subgroup of $G = SL(2,\Z_p)$ of level $p^i$.  As the heart of the proof is in manipulating a variety of $\p$-congruence subgroups of $\Gamma$, for any $U_\p \subset G$ we shall use $\Gamma(U_\p)$ to denote denote the intersection of $\Gamma$ with $U_\p$.  Let $\widetilde{H}^1(\Gamma, \F_p )$ denote the injective limit of the cohomology groups $H^1( \Gamma(\p^k), \F_p)$.  This space carries an action of the congruence completion of $\Gamma$, hence of $G_1$, and for $U_\p \subset G_1$ we define

\begin{equation*}
\widehat{H}^1( \Gamma(U_\p), \F_p ) = \widetilde{H}^1( \Gamma, \F_p )^{U_\p}.
\end{equation*}

The groups $\widehat{H}^1( \Gamma(U_\p), \F_p )$ also satisfy the power saving (\ref{roundtower}).  If a cohomology class on $\Gamma(U_\p)$ vanishes on restriction to some smaller subgroup $\Gamma(G_k)$ then it must descend to a cohomology class on $\Gamma(U_\p) / \Gamma(G_k) \simeq U_\p / G_k$, and as the space of such classes is three dimensional we have

\begin{equation}
\label{complete}
\dim \widehat{H}^1( \Gamma(U_\p), \F_p ) \le \dim H^1( \Gamma(U_\p), \F_p ) + 3.
\end{equation}

We will need to use the commensurator of $\Gamma$ to provide isomorphisms between the cohomology of different congruence subgroups.  By lemma \ref{congfactor} there is a $t$ such that $\Gamma(G_t)$ is of the form $A^\times_1(F) \cap ( G_t U^\p )$.  Therefore if $\eta \in A^\times(F)$ is any rational element which normalises $U^\p$, and $V_\p$ is such that $V_\p, \eta V_\p \eta^{-1} \subset G_t$, then $\eta \Gamma( V_\p ) \eta^{-1} = \Gamma( \eta V_\p \eta^{-1} )$.  We therefore have 

\begin{equation}
\label{conj}
H^1( \Gamma( V_\p ), \F_p ) \simeq H^1( \Gamma( \eta V_\p \eta^{-1} ), \F_p )
\end{equation}

and likewise for $\widehat{H}^1$.  The group $\Lambda$ of such elements in the commensurator will be quite large; in particular its closure in $A_\p^\times$ will be of finite index, so if we fix $A_\p \simeq M_2(\Q_p)$ there will be some $r$ such that

\begin{equation*}
g_r = \left( \begin{array}{cc} p^r & 0 \\ 0 & 1 \end{array} \right)
\end{equation*}

is in this closure.  Because conjugation is continuous we see that (\ref{conj}) holds with $\eta = g_r$.

We turn to the main argument, and introduce a family of subgroups of $G_1$ on which to perform induction.  Let $T \subset G_1$ be the diagonal subgroup, $l$ and $m$ be natural numbers satisfying $l \ge m$ to be fixed later, and for any $k$ and $i$ define the subgroups $A_{i,k}$ by

\begin{equation*}
A_{i,k} = \{ a \in T \; | \; a \cong 1 \; (p^{k-il}) \}.
\end{equation*}

Let

\begin{equation*}
n_i = \left( \begin{array}{cc} 1 & p^{il-m} \\ 0 & 1 \end{array} \right), \qquad n'_k = \left( \begin{array}{cc} 1 & p^{k-m} \\ 0 & 1 \end{array} \right),
\end{equation*}

and let the subgroup generated by $n'_k$ be $N'_k$.  Let $A_{i,k}' = n_i A_{i,k} n_i^{-1}$, so that $A_{i,k}'$ is generated by the element

\begin{equation*}
\left( \begin{array}{cc} 1+p^{k-il} & 0 \\ 0 & (1+p^{k-il})^{-1} \end{array} \right) \left( \begin{array}{cc} 1 & u \cdot p^{k-m} \\ 0 & 1 \end{array} \right)
\end{equation*}

for some unit $u$, and hence $A_{i,k}A_{i,k}' = A_{i,k}N'_k$.  We shall prove by induction on $i$ that

\begin{equation}
\label{induct}
\dim \widehat{H}^1(\Gamma( G_k A_{i,k} ), \F_p )  \le  C \alpha^i p^{2k}
\end{equation}

for all $i$ such that $A_{i,k} \subset G_1$, and some uniform $C$ and $\alpha < 1$.  By setting $i$ to be the appropriate constant multiple of $k$ we will obtain

\begin{equation*}
\dim H^1( \Gamma( G_k T ), \F_p ) \ll p^{(1-\delta)k},
\end{equation*}

from which the proposition follows by conjugating by a power of $g_r$.  The base case of this is equation (\ref{roundtower}) (or its analogue for $\widehat{H}^1$), and for the inductive step we will use the following inclusion-exclusion inequality

\begin{multline*}
\dim \widehat{H}^1(\Gamma( G_k A_{i+1,k} ), \F_p ) + \dim \widehat{H}^1(\Gamma( G_k A'_{i+1,k} ), \F_p ) \\
- \dim \widehat{H}^1(\Gamma( G_k A_{i+1,k} A'_{i+1,k} ), \F_p ) \le \dim \widehat{H}^1(\Gamma( G_k ( A_{i+1,k} \cap A'_{i+1,k} ), \F_p ).
\end{multline*}

This is simply expressing the fact that both of the first two cohomology spaces is contained in the fourth, and their intersection is the third.  We may simplify this using the fact that $A_{i,k}$ and $A_{i,k}'$ are conjugate in $G$, so $\dim \widehat{H}^1(\Gamma( G_k A_{i+1,k} ), \F_p ) = \dim \widehat{H}^1(\Gamma( G_k A'_{i+1,k} ), \F_p )$.  We may also apply the inclusion $A_{i-1,k} \subseteq A_{i+1,k} \cap A'_{i+1,k}$ (valid because we chose $l \ge m$) and the equality $A_{i,k}A_{i,k}' = A_{i,k}N'_k$ to obtain

\begin{multline*}
2\dim \widehat{H}^1(\Gamma( G_k A_{i+1,k} ), \F_p ) \le \dim \widehat{H}^1(\Gamma( G_k A_{i,k} ), \F_p ) + \dim \widehat{H}^1(\Gamma( G_k A_{i,k} N'_k ), \F_p ).
\end{multline*}

Our inductive hypothesis provides a bound on the first term of the right hand side, and to bound the second term we will use the discussion above about the commensurator of $\Gamma$ which will allow us to `round out' the congruence subgroup $G_k A_{i,k} N'_k$ of $G$ and apply the inductive hypothesis again.  As conjugation by $g_r$ leaves the subgroups $A_{i,k}$ invariant, expands the upper triangular subgroups $N'_k$ by a factor of $p^r$ and contracts the lower triangular subgroups by $p^{-r}$, we may take $l = m = 2r$ to obtain the equality $G_{k-r}A_{i,k} = g_r^{-1} G_k A_{i,k} N'_k g_r$.  Therefore 

\begin{eqnarray*}
\dim \widehat{H}^1(\Gamma( G_k A_{i,k} N'_k ), \F_p ) & = & \dim \widehat{H}^1(\Gamma( G_{k-r} A_{i,k} ), \F_p )\\
 & \le & \dim \widehat{H}^1(\Gamma( G_{k-r} A_{i-1,k-r} ), \F_p )\\
 & \le & C \alpha^{i-1} p^{2(k-r)},
\end{eqnarray*}

where the last inequality is our inductive hypothesis.  Combining the two inequalities we have

\begin{eqnarray*}
2\dim \widehat{H}^1(\Gamma( G_k A_{i+1,k} ), \F_p ) & \le & C \alpha^i p^{2k} + C \alpha^{i-1} p^{2(k-r)} \\
\dim \widehat{H}^1(\Gamma( G_k A_{i+1,k} ), \F_p ) & \le & (\alpha^{-1}/2 + \alpha^{-2} p^{-2r}/2 ) C \alpha^{i+1} p^{2k},
\end{eqnarray*}

and it is clear that we may choose $\alpha$ so that the term in brackets is at most $1$.  This establishes (\ref{induct}) for all $i$, and as a consequence

\begin{equation}
\label{injlim}
\dim \widehat{H}^1( \Gamma( G_k T ), \F_p ) \ll p^{(2-\delta)k}.
\end{equation}

We have the same bound for $H^1( \Gamma( G_k T ), \F_p )$ by inequality (\ref{complete}).  The proposition follows by conjugating by a suitable power of $g_r$ and recalling our freedom to pass to normal subgroups of $p$-power index.

\begin{remark}
If the prime $2$ is totally split in $F$ and we may take $r=1$ in the above (this will be the case for the Bianchi group $\SL(2,\OO_7)$, for instance) then we may choose $\delta$ in the theorem to be $(2 - \ln_2( 1 + \sqrt{3} ) )/4 > 1/8$.
\end{remark}

\section{Examples}
\label{coh4}

In this section we verify the hypothesis of theorem \ref{weight} for the case $\Gamma = \SL(2,\OO_{-2})$ and $\p, \overline{\p}$ the two conjugate primes above 3.  We also give examples of towers of rational homology 3-spheres using proposition \ref{analytic}.  For small $p$ this is done by direct computation, and for large $p$ probable examples are produced by computing the dimensions of $H^1(\Gamma, \Sym^d \circ \pi_\p)$ and assuming the nondegeneration of a term in the Leray spectral sequence.  Here $\Sym^d$ are the symmetric power representations of $\SL(2,\F_p)$ with $\F_p$ coefficients and $\pi_\p$ is the quotient map $\Gamma \rightarrow \Gamma / \Gamma(\p) \simeq \text{SL}(2,\F_p)$.

\subsection{Verification of Theorem \ref{weight} }
\label{coh41}

As mentioned, we shall consider $\Gamma = \SL(2,\OO_{-2})$ and choose our two primes to be $\p$ and $\overline{\p}$, where $\p \overline{\p} = 3$.  Note that this choice is not arbitrary, but rather was suggested by \cite{CD} where they prove that $H^1_{\text{cusp}}( \Gamma(\p^k \overline{\p} ), \Q ) = 0$ for all $k$ assuming standard conjectures from number theory.  It turns out to be infeasible to compute $H^1( \Gamma(\p^k \overline{\p} ), \F_3 )$ for large enough $k$ to show that the bound of proposition \ref{saving} is satisfied, so instead we shall directly verify the consequences of that bound which were used in the proof.  Our first step in bounding $H^1( \Gamma(\p^k \overline{\p} ), \F_3 )$ will be to apply a decomposition of the regular representation $\F_3[ SL(2,\F_3) ]$ from \cite{BN} as follows:

\begin{eqnarray*}
\dim H^1( \Gamma(\p^k \overline{\p} ), \F_3 ) & = & \dim H^1( \Gamma, \F_3[ \Gamma / \Gamma(\p^k \overline{\p} ) ] ) \\
& = & \dim H^1( \Gamma, \F_3[ \Gamma / \Gamma(\p^k) ] \otimes \F_3[ \Gamma / \Gamma(\overline{\p}) ] ) \\
& \ll & \sum_{d=0}^2 \dim H^1( \Gamma, \F_3[ \Gamma / \Gamma(\p^k) ] \otimes \Sym^d ).
\end{eqnarray*}

Here $\Sym^d$ is the symmetric power representation of $\SL(2,\F_3)$ composed with reduction at $\overline{\p}$.  As in section 2 these cohomology groups may be approximated by the kernels of two maps $\partial \oplus \delta$ and $\Delta$, acting on modules which are direct sums of $\F_3[ G/G(3^k) ] \otimes \Sym^d$, and our goal is to show that the dimension of both kernels is $\ll 3^{2k}$.  We describe this for $\Delta$, the other case being identical.  $\Delta$ is a map from $\F_3[ G/G(3^k) ] \otimes \Sym^d$ to itself given by an element of $\Z[\Gamma]$, and we will show that $\ker \Delta$ is small by showing that $\im \Delta^t = \Delta$ is large.

Let $G = SL(2,\Z_3)$ as before, thought of as the completion of $\Gamma$ at $\p$.  Define $\F_3 \llbracket G( 3 ) \rrbracket^{\le n}$ to be the quotient of this completed group ring by the ideal of elements of degree $> n$, and $\F_3 \llbracket G \rrbracket^{\le n}$ be the induction of this from $G(3)$ to $G$.  We define representations of $\Gamma$ as follows:

\begin{eqnarray*}
V & = & \F_3 \llbracket G \rrbracket \otimes \Sym^d, \\
V_k & = & \F_3[ G / G(3^k) ] \otimes \Sym^d,\\
V^{\le n} & = & \F_3 \llbracket G \rrbracket^{\le n} \otimes \Sym^d.
\end{eqnarray*}

$V$ is a free $\F_3 \llbracket G(3) \rrbracket$ module, and we may choose a free basis $\{ v_{j,k} \}$ for it by letting $\{ \gamma_j \}$ be a set of coset representatives for $G/G(3)$, $\{ u_i \}$ be a basis for $\Sym^d$, and setting $v_{j,k} = \gamma_j \otimes u_k$.  Because $\Delta$ commutes with the right $\F_3 \llbracket G(3) \rrbracket$ action, to bound $H^1(\Gamma, V_k)$ it suffices to show that for each $(j,k)$ there is an element of $\im \Delta$ whose unique lowest degree co-ordinate with respect to the the basis $\{ v_{j,k} \}$ occurs at $v_{j,k}$.  This may be checked by computing the image of $\Delta$ between the modules $V^{\le n}$ and showing it contains terms supported in the $(j,k)$th co-ordinate for all $(j,k)$.  We have done this in the case of both operators for $n = 4$, thus establishing a power saving in the dimensions of their kernels as required.  Checking the constants produced by theorem \ref{weight} in this case and combining with the lower bound of \cite{FGT} establishes the following:

\begin{cor}
$d^{15/8} \gg \dim H^1( \SL(2,\OO_{-2}), E_d ) \gg d $.
\end{cor}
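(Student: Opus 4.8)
The statement has two halves, and only one of them is substantial. The lower bound $\dim H^1(\SL(2,\OO_{-2}), E_d) \gg d$ needs no new argument: it is exactly the theorem of Finis, Grunewald and Tirao \cite{FGT}, who carry out base change from $\Q$ explicitly for the Bianchi groups and exhibit $\gg d$ linearly independent cuspidal classes. So the plan is to put all the work into the upper bound $\dim H^1(\SL(2,\OO_{-2}), E_d) \ll d^{15/8}$, which I would deduce from Theorem \ref{weight}.

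First I would set up Theorem \ref{weight} for $\Gamma = \SL(2,\OO_{-2})$, $F = \Q(\sqrt{-2})$ and the rational prime $p=3$. Since $-2$ is a square modulo $3$ the prime $3$ splits in $F$ as $\p\overline{\p}$, and $A = M_2(F)$ is trivially split above $3$, so the hypotheses on $p$ are met. Under the conjugate pair of archimedean embeddings of $F$ the coefficient system $E_d = \Sym^d\otimes\overline{\Sym}^d$ is precisely the weight-$(d,d)$ representation $\rho_d$ of the theorem, so $\prod_i d_i = d^2$ and Theorem \ref{weight} yields $\dim H^1(\Gamma, E_d)\ll d^{2(1-\delta)}$ once hypothesis (\ref{condition}) is verified. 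Checking (\ref{condition}) for this $\Gamma$ and $p=3$ is the first real step; rather than computing $\dim_{\F_3}H^1(\Gamma(3\p^k),\F_3)$ directly for large $k$, I would (following section \ref{coh41}) use the decomposition of $\F_3[\SL_2(\F_3)]$ into a bounded number of copies of the symmetric powers $\Sym^{d'}$ with $d'\le 2$ coming from \cite{BN}, reducing the needed bound (\ref{roundtower}) to a power saving for $\dim H^1(\Gamma, \F_3[\Gamma/\Gamma(\p^k)]\otimes\Sym^{d'})$; the approximate Hodge isomorphism of section \ref{coh22} together with the almost-commutativity of $\F_3\llbracket\SL_2(\Z_3)\rrbracket$ then reduces this in turn to the finite assertion that the operators $\Delta$ and $\partial\oplus\delta$, acting on the truncated modules $V^{\le n}$ with $n=4$, have images containing a term of strictly minimal degree in every coordinate. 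This finite check is to be done by computer.

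Second I would chase the constants through the skewed-tower induction of section \ref{coh34} to pin down $\delta$. For $\SL(2,\OO_{-2})$ the commensurator is all of $\mathrm{PGL}_2(F)$, so the group $\Lambda$ of rational elements used there is large and its closure in $A_\p^\times\simeq\mathrm{GL}_2(\Q_3)$ contains $g_1 = \mathrm{diag}(3,1)$; thus one may take $r=1$, hence $l=m=2r=2$. The inductive step then forces the geometric-decay constant $\alpha$ to satisfy $2\alpha^2-\alpha-3^{-2}\ge 0$, so $\alpha = (3+\sqrt{17})/12$ is admissible, and running the induction to completion and conjugating back by a power of $g_1$ produces a power saving with $\delta = \tfrac14\log_3\!\big(12/(3+\sqrt{17})\big)$. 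Since $12/(3+\sqrt{17})>\sqrt[4]{3}$ this $\delta$ exceeds $1/16$, whence $2(1-\delta)<15/8$ (in fact one gets a somewhat smaller exponent, but $15/8$ is the clean consequence). The remaining loose ends are the auxiliary error terms — the $O(p^{K-|k|}) = O(d)$ coming from $\dim\Delta^0$ in the reduction of section \ref{coh33}, and the bounded multiplicity from the Brauer decomposition — both of which are $\ll d$ and hence dominated by $d^{2(1-\delta)}$ because $2(1-\delta)>1$. Assembling, $\dim H^1(\Gamma, E_d)\ll d^{2(1-\delta)}\ll d^{15/8}$.

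The main obstacle is the finite computation establishing (\ref{roundtower}): one must show that truncating $\F_3\llbracket\SL_2(\Z_3)\rrbracket$ at degree $4$ already suffices to see the full rank of $\Delta$ and of $\partial\oplus\delta$ modulo the next filtration step, in every coordinate, for each of the three symmetric powers. Everything downstream — identifying $E_d$ with $\rho_{(d,d)}$, invoking Theorem \ref{weight}, and the constant-chasing that yields $\delta>1/16$ — is mechanical once that computation goes through.
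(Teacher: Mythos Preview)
Your proposal is correct and follows essentially the same path as the paper: the lower bound is quoted from \cite{FGT}, and the upper bound is obtained by applying Theorem~\ref{weight} at $p=3$, verifying hypothesis (\ref{condition}) via the finite degree-$4$ computation on the truncated modules $V^{\le 4}$ of section~\ref{coh41}, and then tracking the constants through the skewed-tower induction of section~\ref{coh34} with $r=1$. The paper compresses the last step to the single line ``Checking the constants produced by theorem~\ref{weight} in this case,'' so your explicit constant-chase is just a more detailed version of what the paper asserts; the only caution is that the quantity $\tfrac14\log_3\!\big(12/(3+\sqrt{17})\big)$ you compute is the single-prime saving coming out of (\ref{gamma0}), and one must still account for the passage from $\Gamma_0(\p_i^{|k|})$ to $\Gamma_0(\Pp)$ via Lemma~\ref{pnormal} before reading off the exponent for $E_d$.
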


\subsection{Rational Homology Spheres}
\label{coh42}

We now summarise our computations testing the hypotheses of propositon \ref{analytic}.  As we have discussed, towers of rational homology spheres of the type which the proposition exhibits have already been seen in \cite{BE} and \cite{CD}, and the purpose of this section is merely to demonstrate that they are relatively common.   We have searched for examples among a family of orbifolds depending on two integer parameters called the twist knot orbifolds, denoted $T(n,k)$, which are described in more detail in  \cite{CD} and \cite{HS}.  The corresponding lattices $\Gamma$ are naturally subgroups of $PSL(2,\C)$, but passing to their $SL(2,\C)$ double cover does not alter their $\F_p$ cohomology for $p \neq 2$, and so we freely identify the two.  Establishing the hypothesis of proposition \ref{analytic} involves finding a degree 1 prime $\p$ of the trace field of $\Gamma$ at which $\Gamma$ unramified and showing that $\dim H^1( \Gamma(\p), \F_p) = 3$.  If $N(\p) \le 50$ or so this may be checked directly, while for large $\p$ we shall express this cohomology group as $H^1( \Gamma, \F_p [ \PSL(2,\F_p) ] )$ (as we may neglect the centre) and use the decomposition of this regular representation given in \cite{BN}.  This decomposition may be stated as

\begin{equation*}
\F_p [ \PSL(2,\F_p) ] = (\Sym^{p-1})^{\oplus p} \oplus W \oplus \bigoplus_{ \substack{  2 \le i \le p-3, \\ i \text{ even }  } }  V_i^{\oplus i+1},
\end{equation*}

where the decomposition series of $W$ and $V_i$ are

\begin{equation*}
\{ 1, \Sym^{p-3}, 1 \} \quad \text{and} \quad \{ \Sym^i, \Sym^{p-i-1} \oplus \Sym^{p-i-3}, \Sym^i \}
\end{equation*}

respectively.  We have

\begin{equation*}
\dim H^1( \PSL(2, \F_p), \Sym^{p-3} ) = 1,
\end{equation*}

and it appears that the pullback of this class to $\Gamma$ is always nonzero so that the smallest dimensions the cohomology groups $H^1( \Gamma, \Sym^d )$ can have is

\begin{equation}
\label{small}
\begin{array}{rl}
\dim H^1( \Gamma, \Sym^d ) = 1 & \text{for } d = p-3, \\
 = 0  & \text{otherwise}.
\end{array}
\end{equation}

If (\ref{small}) holds, we may use the Leray spectral sequence which computes $H^1(\Gamma(\p), \F_p)$ from $H^1( \Gamma, \Sym^d )$ to see that the expected dimension of this group is 3, as required.  $\Sym^{p-3}$ occurs as a composition factor in $V_2$, which has multiplicity 3 in $\F_p [ \PSL(2,\F_p) ]$, and we expect this to be its only contribution to $H^1(\Gamma(\p), \F_p)$.  When it appears in $W$ and as a subrepresentation of $V_{p-3}$, its contribution should be cancelled by $H^0( \Gamma, \F_p)$, which can be easily checked by constructing the relevant extension.  The contribution coming from its appearance as a quotient of $V_{p-3}$ should be cancelled by $H^2( \Gamma, \Sym^{p-3})$, which will be nonzero by Poincare duality.  We have been unable to check this for large $\p$, but it still seems likely that $\dim H^1(\Gamma(\p), \F_p)$ will be 3 if we are in the situation of (\ref{small}).

The table below summarises our experimental data, listing the parameters of the various orbifolds considered, whether they are arithmetic or not, the number of primes $\p$ tested and the number for which the dimensions of $H^1$ were as in (\ref{small}).\\

\begin{tabular}{r | ccccccc}
$(n,k)$ & $(-3,3)$ & $(-2,3)$ & $(2,3)$ & $(3,3)$ & $(4,3)$ & $(3,4)$ & $(4,4)$ \\
\hline
Arithmetic & y & y & y & y & n & n & n \\
No. primes & 83 & 80 & 79 & 35 & 82 & 38 & 36\\
No. analytic & 8 & 6 & 2 & 6 & 9 & 6 & 5\\
\end{tabular}

\end{document}